\definecolor{hot}{RGB}{65,105,225}
\theoremstyle{plain}
\newtheorem{theorem}{Theorem}[section]
\newtheorem{lemma}[theorem]{Lemma}
\newtheorem{proposition}[theorem]{Proposition}
\newtheorem{prop}[theorem]{Proposition}
\newtheorem{prop-def}[theorem]{Proposition-Definition}
\newtheorem{conjecture}[theorem]{Conjecture}
\theoremstyle{definition}
\newtheorem{rem}[theorem]{Remark}
\newtheorem{example}[theorem]{Example}
\theoremstyle{remark}
\numberwithin{equation}{section}
\def\pr{\mathrm{pr}}
\def\ord{\mathrm{ord}}
\def\Spec{\mathrm{Spec}\,}
\def\Hom{\mathrm{Hom}}
\def\bm{\mathbf{m}}
\def\bk{\mathbf{k}}
\def\bZ{\mathbb{Z}}
\def\cO{\mathcal{O}}
\def\cL{\mathscr{L}}
\def\bC{\mathbb{C}}
\def\bQ{\mathbb{Q}}
\def\bN{\mathbb{N}}
\def\bP{\mathbb{P}}
\def\cM{\mathcal{M}}
\def\ra{\rightarrow}
\def\sX{\mathscr{X}}
\def\sY{\mathscr{Y}}
\def\ra{\rightarrow}
\def\ol{\overline}
\def\Ct{\bC\llbracket t \rrbracket}
\def\bA{\mathbb{A}}
\def\Cont{\text{\rm Cont}}
\def\xra{\xrightarrow}
\title[Cohomology of contact loci]{Cohomology of contact loci}  
\author{Nero Budur}
\address{Nero Budur:
(1) KU Leuven, Celestijnenlaan 200B, B-3001 Leuven, Belgium; (2) BCAM,  Basque Center for Applied Mathematics, Mazarredo 14, 48009 Bilbao, 
Basque Country, Spain.} 
\email{nero.budur@kuleuven.be}
\author{J. Fern\'andez de Bobadilla}
\address{Javier Fern\'andez de Bobadilla:  
(1) IKERBASQUE, Basque Foundation for Science, Maria Diaz de Haro 3, 48013, 
    Bilbao, Basque Country, Spain;
(2) BCAM,  Basque Center for Applied Mathematics, Mazarredo 14, 48009 Bilbao, 
Basque Country, Spain; 
(3) Academic Colaborator at UPV/EHU.} 
\email{jbobadilla@bcamath.org}
\author{Quy Thuong L\^e}
\address{Quy Thuong L\^e: VNU University of Science, Vietnam National University, Hanoi, 
334 Nguyen Trai Street, Thanh Xuan District, Hanoi, Vietnam}
\email{leqthuong@gmail.com}
\author{Hong Duc Nguyen}
\address{Hong Duc Nguyen: (1) BCAM, Basque Center for Applied Mathematics, Alameda de Mazarredo 14, 48009 Bilbao, Bizkaia, Spain; (2) TIMAS, Thang Long University, Nghiem Xuan Yem, Hanoi, Vietnam.} 
\email{hnguyen@bcamath.org $ \textup{\it and} $ nhduc82@gmail.com}
\keywords{Arc space; jet space; contact locus; monodromy; Milnor fibration.}
\subjclass[2010]{14E18, 32S55.}
\begin{document}           

\begin{abstract}
We construct a  spectral sequence converging to the cohomology with compact support of the $m$-th contact locus of a complex polynomial.  The first page is explicitly described in terms of a log resolution and coincides with the first page of McLean's spectral sequence converging to the Floer cohomology of the $m$-th iterate of the monodromy, when the polynomial has an isolated singularity. Inspired by this connection, we conjecture that if two germs of holomorphic functions are embedded topologically equivalent, then the Milnor fibers of the their tangent cones are homotopy equivalent.
\end{abstract}

\maketitle                 

\section{Introduction and main results}

The motivation for this note comes from a question of Seidel, and a subsequent question by McLean. In  \cite{DL} Denef and Loeser proved that the Euler characteristic of the contact loci of a complex polynomial $f$ coincides with the Lefschetz number of the $m$-th iterate of the monodromy of $f$. It was observed by Seidel that this in turn coincides with the Euler characteristic of the Floer cohomology of the same monodromy iterate, in the case when $f$ has an isolated singularity, and motivated him to ask what is the relation between the cohomology of the $m$-th contact locus and the  Floer cohomology of the $m$-th iterate of the monodromy.  McLean  \cite{Mclean} constructed a spectral sequence converging to the Floer cohomology of the $m$-th iterate of the monodromy of $f$, whose first page is completely described in terms of a log resolution of $f$, and asked if there is a similar spectral sequence converging to the cohomology of the $m$-th contact locus of $f$. Our main result is an affirmative answer to this question, if one takes compactly supported cohomology for the contact locus. 

It is natural to conjecture that the Floer cohomology of the $m$-th iterate of the monodromy of $f$ is isomorphic to the compactly supported cohomology of the $m$-th contact locus of $f$, and that this isomorphism comes from an isomorphism of McLean spectral sequence with ours. Besides giving  very different interpretations of the same object, if true, our conjecture would endow  Floer cohomology of the $m$-th iterate of the monodromy with a mixed Hodge structure, and would show that McLean's spectral sequence satisfies the same non-trivial degeneration properties as our spectral sequence. 

The conjecture is true if $m$ is the multiplicity of an isolated hypersurface singularity. Inspired by this observation, we conjecture also that if two germs of holomorphic functions are embedded topologically equivalent, then the Milnor fibers of their tangent cones are homotopy equivalent.

To state the main result, let $X$ be a smooth complex algebraic variety of dimension $d$.  For each $m\ge 0$, the $m$-th jet scheme $\cL_m(X)$ of $X$ is the variety parametrizing morphisms $$\gamma:\Spec \mathbb C[t]/(t^{m+1} ) \to X$$ of schemes over $\bC$.  We denote by $\gamma(0)$ the center of a jet $\gamma$, that is, the image in $X$ of the closed point of $\Spec \bC[t]/(t^{m+1})$.

Let $$f\colon X\to \mathbb{C}$$ be a non-invertible regular function, that is, not a unit in the ring of regular functions, or equivalently, a regular function with non-empty zero locus. For a jet $\gamma\in\cL_m(X)$ we denote by $f(\gamma)$ the truncated power series given by the image of $s$ under the morphism of $\bC$-algebras
$$\bC[s]\xrightarrow{\gamma^\#\circ f^\#}\bC[t]/(t^{m+1})$$
 corresponding to the composition $f\circ \gamma$.

Fix a non-empty Zariski closed subset $\Sigma$ in $X_0=f^{-1}(0)$. The {\it $m$-th (restricted) contact locus of $f$} is defined to be 
$$
\mathscr X_{m}(f,\Sigma):=\{\gamma\in \cL_m(X) \mid { \gamma(0)\in \Sigma \text{ and }} f(\gamma)\equiv t^{m}\pmod{t^{m+1}}\}.
$$
For this to be non-empty, we assume that $m>0$.

Let $h:Y\to X$ be a log resolution of $(f,\Sigma)$, that is, a proper morphism from a smooth variety $Y$ such that $E=h^{-1}(X_0)$ and $h^{-1}(\Sigma)$ are divisors with simple normal crossings and the restriction $h \colon Y \setminus E \to X\setminus X_0$ is an isomorphism. By Hironaka, we can and in fact assume that $h$ is a composition of blowing ups of smooth centers. We denote by $E_i$ with $i$ in $S$, the irreducible components of $E$. We define  $$m_i=\ord_fE_i\quad\text{ and }\quad\nu_i=\ord_{K_{Y/X}}E_i +1,$$ 
where $K_{Y /X}$ is the relative canonical divisor defined by the vanishing of $\det dh$. 

We assume that $h$ is {\em $m$-separating}, that is, $m_i+m_j>m$ if $E_i\cap E_j\neq \emptyset$ for all $i\neq j\in S$. By  Lemma \ref{lemMsep} below, such a log resolution always exists. 
We set  
$$A:=\{i\in S\mid h(E_i)\subset\Sigma\},$$
$$S_m:=\{i\in A \mid  m_i \text{ divides } m\},$$ and $$k_i:=m/m_i$$ for each $i\in S_m$ if $S_m$ is non-empty. Fix a tuple of integers $w=(w_i)_{i\in S}$ with $w_i\ge 0$ such that the divisor $$W=-\sum_{i\in S} w_i E_i$$ is relatively very ample for $h$.  We can, and we will assume, that $w_i=0$ if $E_i$ is not an exceptional divisor. For an integer $p$, we let
$$S_{m,p}:=\{i\in S_m \mid  w_ik_i= -p \}$$
if $S_m$ is non-empty, otherwise we set $S_{m,p}$ to be the empty set as well. 

Let $E_i^{\circ}=E_i\setminus \cup_{j\ne i}E_j$. Then there exists an unramified cyclic cover $\tilde{E_i^{\circ}}\ra E_i^\circ$ of degree $m_i$,  given locally in a neighborhood $U$ in $Y$ of a point in $E_i^{\circ}$ by
$$
\{(z,P)\in \mathbb{C}\times (E_i^{\circ}\cap U) \mid z^{m_i}=u(P)^{-1}\},
$$
where $f\circ h=u\cdot y_i^{m_i}$ with $y_i$ a local equation for $E_i$ and $u$ an invertible regular function on $U$. 

Our main result is the following:

\begin{theorem}\label{main}
Let $f:X\to\bC$ be a non-invertible regular function on a smooth variety $X$ of dimension $d$. Let $\Sigma$ be a non-empty closed subset of $f^{-1}(0)$, $m> 0$ an integer, and $h:Y\to X$ an $m$-separating log resolution of $(f,\Sigma)$. 
Then there is a cohomological spectral sequence 
$$E_1^{p,q}=\bigoplus_{i\in S_{m,p}} H_{2(d(m+1)-k_i\nu_i-1)-(p+q)} (\tilde{E^{\circ}_i},\mathbb{Z})\quad\Rightarrow\quad H^{p+q}_c\left(\mathscr X_{m}(f,\Sigma),\mathbb{Z}\right)$$
converging to the cohomology with compact support of the $m$-th contact locus of $f$.
\end{theorem}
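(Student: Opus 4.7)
The plan is to transfer the contact locus to $\cL_m(Y)$ via $h_m$, decompose it along the $E_i$'s, and then use the very ample divisor $W$ to construct a filtration of $\mathscr{X}_m(f,\Sigma)$ whose associated spectral sequence of compactly supported cohomology realises the claimed $E_1$-page.

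The $m$-separating hypothesis first ensures that every $\tilde{\gamma}\in h_m^{-1}(\mathscr{X}_m(f,\Sigma))$ has center on a unique $E_i^\circ$ with $i\in S_m$: otherwise, writing $f\circ h=u\cdot y_i^{m_i}y_j^{m_j}\cdots$ locally around $\tilde\gamma(0)\in E_i\cap E_j$, one would get $\ord_t f(h_m(\tilde\gamma))\geq m_i+m_j>m$, contradicting $f(h_m(\tilde\gamma))\equiv t^m\pmod{t^{m+1}}$. This yields a decomposition $h_m^{-1}(\mathscr{X}_m(f,\Sigma))=\bigsqcup_{i\in S_m}\mathscr{X}_m^i$ into locally closed pieces. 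The map sending $\tilde\gamma\in\mathscr{X}_m^i$ to $(\tilde\gamma(0),z)$, where $z$ is the coefficient of $t^{k_i}$ in $y_i\circ\tilde\gamma$, is a Zariski-locally trivial affine bundle onto $\tilde{E_i^\circ}$ of rank $dm-k_i$, since the constraint $u(\tilde\gamma(0))z^{m_i}=1$ coming from the leading term of $f(h_m(\tilde\gamma))$ is precisely the cyclic cover equation defining $\tilde{E_i^\circ}$. Combining this with the standard change-of-variables fiber analysis, which exhibits the restriction $h_m|_{\mathscr{X}_m^i}$ as a Zariski-locally trivial affine bundle of rank $k_i(\nu_i-1)$ onto its image $Y_i:=h_m(\mathscr{X}_m^i)\subset \mathscr{X}_m(f,\Sigma)$, yields that the $Y_i$ are pairwise disjoint smooth locally closed subvarieties of complex dimension $d(m+1)-1-k_i\nu_i$, with $H_\ast(Y_i,\mathbb{Z})\cong H_\ast(\tilde{E_i^\circ},\mathbb{Z})$ by homotopy invariance.

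Next, the relatively very ample divisor $W$ organises the $Y_i$'s into a filtration. Any global section $s$ of $\cO_Y(W)$ gives, through the identification $h_*\cO_Y=\cO_X$, a regular function on $X$; its order of vanishing along $E_i$ is at least $w_i$, with equality for generic $s$ by relative very-ampleness. Therefore $\ord_t s(\gamma)\geq w_i k_i$ for $\gamma\in Y_i$, with equality for sections whose divisor $-W+D$ has effective part $D$ avoiding the corresponding lifted center. Defining
$$
\phi(\gamma):=\min_{s\in H^0(Y,\cO_Y(W))}\ord_t s(\gamma),
$$
very-ampleness ensures that the minimum is realised as $w_i k_i$ on each $Y_i$, and each condition $\ord_t s(\gamma)\geq c$ being Zariski-closed implies $\phi$ is upper-semi-continuous. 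One obtains a finite decreasing filtration by closed subvarieties $F^p:=\{\phi\geq -p\}$ (for $p\leq 0$) with $F^p\setminus F^{p+1}=\bigsqcup_{i\in S_{m,p}}Y_i$.

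The filtration yields the standard spectral sequence of compactly supported cohomology
$$
E_1^{p,q}=H^{p+q}_c(F^p\setminus F^{p+1})=\bigoplus_{i\in S_{m,p}}H^{p+q}_c(Y_i)\Longrightarrow H^{p+q}_c(\mathscr{X}_m(f,\Sigma),\mathbb{Z}).
$$
Poincar\'e duality on the smooth $Y_i$ of real dimension $2(d(m+1)-1-k_i\nu_i)$, combined with $Y_i\simeq\tilde{E_i^\circ}$, identifies each $H^{p+q}_c(Y_i,\mathbb{Z})$ with $H_{2(d(m+1)-k_i\nu_i-1)-(p+q)}(\tilde{E_i^\circ},\mathbb{Z})$, producing the announced first page. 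The main obstacle is the construction of $\phi$: one must verify that the very ample divisor $W$ possesses enough sections to \emph{realise} the minimum $w_i k_i$ at every point of each $Y_i$, and that the level sets of $\phi$ coincide exactly with $\bigsqcup_{i\in S_{m,p}}Y_i$. The decomposition into $Y_i$'s and their affine bundle structure in the earlier steps are routine adaptations of Denef--Loeser-style jet-space arguments.
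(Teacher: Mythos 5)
Your high-level strategy matches the paper's: decompose along the $E_i$, use $W$ to build a filtration, take the associated spectral sequence for $H^*_c$, and apply Poincar\'e duality plus the homotopy equivalence with $\tilde E_i^\circ$. However, two genuine gaps make the proposal incomplete.

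First, you work entirely at the level of $m$-jets, but the relevant structural facts do not hold there in general, and the paper is careful to avoid exactly this. The very ample divisor enters through $\ord_\gamma\mathcal{I}$, where $\mathcal{I}$ is the ideal on $X$ corresponding to $\Gamma(Y,\cO_Y(W))$. On $Y_i$ this order equals $w_ik_i$, but for an $m$-jet $\gamma$ the quantity $\ord_\gamma s$ is only read off modulo $t^{m+1}$, so whenever $w_ik_i>m$ (which is unavoidable in general, since $w_i$ must be taken large enough for relative very ampleness) your function $\phi$ is blind to the difference between several $Y_i$'s, and $F^p\setminus F^{p+1}$ is not the union you claim. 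Likewise, the affine-bundle structure of $h_m$ over $Y_i$ with fiber $\bC^{(\nu_i-1)k_i}$ is a Looijenga/Denef--Loeser/Ein--Lazarsfeld--Musta\c{t}\u{a}-type statement that holds only for $l$ large enough with respect to the tower of blowups; it is not automatic at $l=m$. The paper handles this by proving the spectral sequence for $\mathscr{X}_m^l=(\pi_m^l)^{-1}(\mathscr{X}_m)$ with $l\gg 0$ (Theorem~\ref{main2}), where all the arc/jet machinery is valid, and then descending to $\mathscr{X}_m$ via the Leray spectral sequence for the affine bundle $\pi_m^l$, which introduces exactly the $2d(l-m)$ degree shift. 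This descent step is missing from your argument, and without it the approach does not go through.

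Second, you assert that the graded pieces of the filtration decompose as $F^p\setminus F^{p+1}=\bigsqcup_{i\in S_{m,p}}Y_i$ and then immediately write $H^{p+q}_c(F^p\setminus F^{p+1})=\bigoplus_i H^{p+q}_c(Y_i)$, but the latter requires each $Y_i$ to be open \emph{and} closed in $F^p\setminus F^{p+1}$. Disjointness alone does not give this: a finite disjoint union of locally closed sets covering a space need not be a union of connected components. Proving that each $\mathscr{X}^l_{m,i}$ is closed in $F_{(p)}\mathscr{X}^l_m$ is Lemma~\ref{lm22} of the paper, which is genuinely nontrivial: it uses the curve selection lemma to produce a wedge, resolves the indeterminacy of the lifted wedge via Hironaka and Remmert, computes intersection numbers of $\bar\beta^*W$ against the families of lines $\sigma^*L_s$, and invokes the Kleiman ampleness criterion to derive a contradiction. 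You describe this and the fibration structure as ``routine adaptations of Denef--Loeser-style jet-space arguments,'' which understates the work: there is no routine reason the boundary of one $Y_i$ inside a fixed $\phi$-level cannot meet another $Y_j$ in the same level.
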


\begin{proposition}
\label{degen} After tensoring with $\bQ$, only the first $d$ pages of the spectral sequence $\{E_r, d_r\}_{r\ge 1}$ can contain non-zero differentials.
\end{proposition}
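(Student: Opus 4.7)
The plan is to show that the spectral sequence of Theorem \ref{main}, after tensoring with $\mathbb{Q}$, arises from a filtered complex of effective length $d$, which forces $d_r \otimes \mathbb{Q} = 0$ for $r \ge d+1$ by standard spectral-sequence algebra.

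First, I would trace through the proof of Theorem \ref{main} to identify how the spectral sequence is built. By the $m$-separating condition, every jet in $\mathscr X_{m}(f,\Sigma)$ lifts uniquely to an $m$-jet in $Y$ centred on $E_i^\circ$ for a single $i \in S_m$, which produces a locally closed decomposition $\mathscr X_{m}(f,\Sigma) = \bigsqcup_{i \in S_m} \mathscr X_{m,i}$. The spectral sequence then arises from a filtration on the compactly supported cochain complex of $\mathscr X_{m}(f,\Sigma)$ induced by this decomposition and the ordering $p = -w_i k_i$ given by the relatively ample divisor $-W$; the associated graded pieces give the $E_1$-terms $\bigoplus_{i \in S_{m,p}} H_{\bullet}(\tilde{E^{\circ}_i},\mathbb{Z})$ of Theorem \ref{main}.

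Second, the key observation is that, although the set of values $\{-w_i k_i\}_{i \in S_m}$ may be arbitrarily large, a non-trivial higher differential $d_r$ can only be supported on pairs (or higher tuples) of indices $i,j,\dots \in S_m$ such that the corresponding divisors $E_i,E_j,\dots$ meet in $Y$: non-adjacent strata $\mathscr X_{m,i}$ and $\mathscr X_{m,j}$ have disjoint closures in $\mathscr X_{m}(f,\Sigma)$ and so contribute only a direct-sum piece to the abutment. Since $Y$ has dimension $d$ and $E = \bigcup_i E_i$ is SNC, at most $d$ components can share a point, so the nerve of the subfamily $\{E_i\}_{i \in S_m}$ has dimension at most $d-1$.

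Third, I would translate this geometric bound into a homological bound by identifying the spectral sequence (over $\mathbb{Q}$) with a Mayer--Vietoris- or \v{C}ech-type spectral sequence associated to a hypercover of $\mathscr X_{m}(f,\Sigma)$ built from the $\mathscr X_{m,i}$ and their adjacencies. The nerve of such a hypercover has dimension at most $d-1$, so that the corresponding filtered complex has length $d$ and therefore the spectral sequence has non-zero differentials only on the first $d$ pages.

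The main obstacle is the second step: showing rigorously that only geometrically adjacent strata give rise to non-trivial higher differentials. This requires a careful analysis of the closures $\overline{\mathscr X_{m,i}}$ in $\mathscr X_{m}(f,\Sigma)$, using both the explicit description of each $\mathscr X_{m,i}$ as an affine-space bundle over the cover $\tilde{E^{\circ}_i}$ and the $m$-separating condition to control how jets centred on $E_i^\circ$ can degenerate to jets centred on $E_j^\circ$ for a neighbouring $j$. Once this identification with a nerve-indexed filtration is in place, the vanishing of $d_r$ for $r \ge d+1$ follows purely from the $d$-dimensionality of $Y$.
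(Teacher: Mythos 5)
Your proposal takes a genuinely different route from the paper, and I believe it has a gap that cannot be closed as stated.

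The paper's proof is Hodge-theoretic. By a lemma of Arapura \cite{A}, the spectral sequence of a filtration by closed algebraic subsets lifts to mixed Hodge structures, so after tensoring with $\bQ$ the differentials $d_r$ are morphisms of $\bQ$-MHS, hence strict with respect to the weight filtration. Using the Poincar\'e duality isomorphism from Lemma~\ref{lm23},
$$H^k_c(\sX^l_{m,i},\bQ)\simeq H^{2d_i-k}(\tilde E_i^\circ,\bQ)^\vee\otimes\bQ(-d_i),$$
and the fact that $\tilde E_i^\circ$ is smooth of dimension $d-1$, one sees that the weights on every non-zero $E_1$-term in total degree $k$ lie in an interval of length at most $d$, namely $[k-d+1,k]\cap[0,k]$. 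Strictness then forces $d_r\otimes\bQ=0$ for $r>d$. The bound $d$ comes from the \emph{dimension of the $\tilde E_i^\circ$}, not from any combinatorial structure of the filtration.

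Your argument instead tries to bound the ``effective length'' of the filtered complex by $d$ via a nerve of the $\{E_i\}_{i\in S_m}$. This does not match the actual construction: the filtration on $\sX^l_m$ in the proof of Theorem~\ref{main2} is indexed by the integers $p=-w_ik_i$, which can spread over an interval of arbitrary length, so the filtration itself has no $d$-step bound. The crucial step you flag yourself --- that a non-zero $d_r$ can only connect strata whose divisors $E_i,E_j,\dots$ meet in $Y$ --- is not established, and I do not think it is true in the form needed. The closure relations among the $\sX_{m,i}^l$ inside $\sX_m^l$ are governed by the curve-selection and intersection-theoretic argument of Lemma~\ref{lm22}, not by whether $E_i\cap E_j\neq\emptyset$; a wedge of arcs can degenerate from $E_i^\circ$ to $E_j^\circ$ through higher jet coordinates without $E_i$ and $E_j$ meeting. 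Moreover, the spectral sequence here is not a \v Cech/Mayer--Vietoris spectral sequence of a cover, so even if the nerve of $\{E_i\}_{i\in S_m}$ had dimension $\le d-1$, that would not by itself bound the number of pages with non-zero differentials. To make progress you would need a genuinely new structural result about the filtration; the paper sidesteps all of this by passing to weights.
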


\begin{example}
(i) If $f=x^r\in\bC[x]$ for some integer $r>0$,  and $\Sigma$ is the origin in $X=\bC$, then the identity map is the only log resolution for $(f,\Sigma)$ and it is $m$-separating for any $m>0$. Then
$$
E_1^{p,q}\simeq\left\{
\begin{array}{cl}
 0& \text{ if }r\nmid m\text{ or }(p,q)\neq (0,2(m-\frac{m}{r})),\\ 
 \bZ^r& \text{ if }r\mid m\text{ and }(p,q)=(0, 2(m-\frac{m}{r})).
\end{array}
\right.
$$
Thus the theorem implies that
$$
H_c^{*}\left(\mathscr X_{m}(f,\Sigma),\mathbb{Z}\right)\simeq\left\{
\begin{array}{cl}
 0& \text{ if }r\nmid m\text{ or }*\neq 2(m-\frac{m}{r}),\\ 
 \bZ^r& \text{ if }r\mid m\text{ and } *=2(m-\frac{m}{r}).
\end{array}
\right.
$$
This is compatible with the isomorphism
$$
\mathscr X_{m}(f,\Sigma)\simeq\left\{
\begin{array}{cl}
\emptyset & \text{ if }r\nmid m,\\
\mu_r\times\bC^{m-\frac{m}{r}} & \text{ if }r\mid m,
\end{array}
\right.
$$
where $\mu_r$ is the group of $r$-th roots of unity, which can be easily checked. 

(ii) In the case that $f$ is a product of linear polynomials on $X=\bC^d$, it is shown in \cite{BT} that the spectral sequence degenerates at $E_1$.
\end{example}

 \begin{rem} If $X=\bC^d$ and with  the additional assumption that $f$ has an isolated singularity $\Sigma=\{x\}$, McLean \cite[Theorem 1.2]{Mclean} showed that there exists a spectral sequence 
\begin{equation}\label{eqMcL}
'\!E^{p,q}_1=\bigoplus_{i\in S_{m,p}} H_{d-1-2k_i\nu_i-(p+q)} (\tilde{E^{\circ}_i},\mathbb{Z})\quad \Rightarrow \quad HF^*(\phi^m,+)
\end{equation}
converging to the Floer cohomology of the $m$-th iterate of the monodromy $\phi$ on the Milnor fiber of $f$. We note that $E_1$ in this case differs from $'\!E_1$ by a $(2dm+d-1)$-shift in the total degree $p+q$, hence up to relabelling, the two pages are the same. 
\end{rem}

\begin{conjecture} If $X=\bC^d$ and $f$ has an isolated singularity $\Sigma=\{x\}$, the two spectral sequences $\{E_r, d_r\}_{r\ge 1}$ and $\{'\!E_r, '\!d_r\}_{r\ge 1}$ are isomorphic,  and 
$$
HF^{*}(\phi^m,+) \simeq H^{*+2dm+d-1}_c\left(\mathscr X_{m}(f,\Sigma),\mathbb{Z}\right).
$$
\end{conjecture}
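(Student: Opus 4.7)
The plan starts from the observation made in the remark just above the conjecture: after shifting total degree by $2dm+d-1$, the $E_1$-page of our spectral sequence coincides term by term with that of McLean's. Both spectral sequences have finitely many non-zero entries per page; ours rationally degenerates after $d$ pages by Proposition \ref{degen}, and McLean's admits an analogous bound coming from the same geometry. Thus any morphism of spectral sequences inducing the identity on $E_1$ would produce the conjectured abutment isomorphism
$$HF^{*}(\phi^m,+)\simeq H^{*+2dm+d-1}_c\bigl(\sX_m(f,\Sigma),\bZ\bigr),$$
at least rationally. The essential task therefore reduces to matching the higher differentials, which we propose to do by exhibiting a common geometric source for both spectral sequences.

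Both filtrations are organized by the data of the log resolution $h\colon Y \to X$, indexed by the components $E_i$ and their cyclic covers $\tilde{E_i^\circ}$. On the arc-space side, our spectral sequence arises from the stratification of $\sX_m(f,\Sigma)$ by the type of lift of $m$-jets to $Y$: each stratum fibres over an open piece of $\tilde{E_i^\circ}$ and supplies the corresponding $E_1$-term via Poincar\'e duality. McLean equips the Milnor fibration with an admissible almost complex structure adapted to $h$ and produces his $E_1$-terms from critical manifolds of an action functional supported near the $\tilde{E_i^\circ}$. The natural candidate for a common source is an intermediate space $\mathcal{Z}$, roughly the real oriented blow-up of $f\circ h$ over a small disk enhanced with $m$-jet data along the preimage of the origin, carrying a canonical filtration by contact order with the components $E_i$. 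Jets in $X$ centered in $\Sigma$ lift uniquely to $\mathcal{Z}$, while closed orbits of $\phi^m$ appear as sections of the torus boundary; both the arc-theoretic stratification and McLean's action filtration should be pull-backs of the single contact-order filtration on $\mathcal{Z}$.

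Concretely, the plan is to (i) define $\mathcal{Z}$ with its filtration; (ii) construct filtered maps from $\mathcal{Z}$ to $\sX_m(f,\Sigma)$ and to the moduli of $m$-periodic orbits of $\phi$, and prove each is a filtered weak equivalence; and (iii) deduce the isomorphism of spectral sequences by functoriality. Step (ii) on the arc side should be tractable by refining the stratification argument behind Theorem \ref{main}; on the Floer side it requires identifying, in a local model near each $\tilde{E_i^\circ}$, the moduli of short Hamiltonian trajectories with that of short truncated arcs, with the $m$-separating hypothesis controlling how adjacent strata interact.

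The main obstacle is the Floer-to-arc identification at the level of differentials. Our differentials are algebraic boundary maps in a stratified spectral sequence, whereas McLean's count pseudo-holomorphic cylinders, and no general dictionary between these kinds of invariants is currently available. A realistic first step is to prove the abutment isomorphism alone, perhaps via a motivic refinement of the Denef--Loeser identity of Euler characteristics. A natural second intermediate goal is to verify the conjecture for log resolutions of small combinatorial complexity, where both spectral sequences degenerate at $E_1$ and only the summands need matching. Combined with the known case of $m$ equal to the multiplicity of the singularity, these reductions should isolate the essential new ingredient: a precise local symplectic normal form near each $\tilde{E_i^\circ}$ admitting an intrinsic arc-theoretic description.
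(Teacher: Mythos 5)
The statement you are addressing is labelled a \emph{conjecture} in the paper, and the paper does not prove it. The only thing the paper establishes is the special case where $m$ equals the multiplicity of $f$ at the singularity, via Proposition~\ref{propMult}: there the index set $S_{m,p}$ is a singleton $\{0\}$ for exactly one value of $p$ (the strict transform of the exceptional divisor of the first blow-up), so both spectral sequences collapse to a single column at $E_1$ and the abutment isomorphism is forced with no analysis of differentials. Your proposal is therefore not a proof of the stated conjecture, nor does the paper contain one for you to be compared against; to your credit, you make this explicit yourself by writing ``no general dictionary between these kinds of invariants is currently available'' and by framing the whole text as a plan rather than an argument.

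Assessed as a research sketch, your proposal is reasonable and contains nothing incorrect, but several of its load-bearing claims are unsubstantiated. You assert without justification that McLean's spectral sequence ``admits an analogous bound coming from the same geometry'' to the $d$-page degeneration of Proposition~\ref{degen}; the paper's argument for degeneration is Hodge-theoretic and there is no obvious Floer-side analogue. The central construction, the filtered space $\mathcal{Z}$ (a real oriented blow-up enhanced with $m$-jet data), is only described informally, and the crucial step (ii) --- that the filtered maps to $\sX_m(f,\Sigma)$ and to the periodic-orbit moduli are filtered weak equivalences --- is precisely the open content of the conjecture, restated in different language. Your proposed reductions (abutment isomorphism via a motivic refinement of Denef--Loeser; resolutions of small combinatorial complexity; the multiplicity case) are sensible starting points and correctly include the one case the paper actually settles, but none of them is carried out here. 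In short: the proposal is a plausible roadmap, consistent with the paper's stated remark that the two $E_1$-pages agree up to a degree shift of $2dm+d-1$, but it does not close the gap that makes this a conjecture rather than a theorem.
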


The conjecture is true if $m$ is the multiplicity of $f$ at the singularity. More generally:

\begin{prop}\label{propMult}
Let $$f=f_m+f_{m+1}+\ldots$$ be a polynomial in $d$ variables vanishing at the origin, where $f_i$ are the homogeneous components of degree $i$, and $m>0$ is the multiplicity of $f$ at the origin. Let $\Sigma=\{0\}$. Then,
$$
H_c^*(\mathscr X _m(f,\Sigma),\bZ)\simeq H_{2(dm-1)-*}(F,\bZ),
$$ 
where $F\simeq \{f_m=1\}$ is the Milnor fiber at the origin of the initial form $f_m$ of $f$. If in addition $f$ has an isolated singularity at the origin, then also
$$HF^{*-2dm-d+1}(\phi^m,+,)\simeq H_{2(dm-1)-*}(F,\bZ).$$
\end{prop}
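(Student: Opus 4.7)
The plan is to compute both sides of the claimed isomorphism independently, using a direct description of the contact locus on one side and a special log resolution on the other.

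For the first isomorphism, I would begin with an explicit description of the contact locus. A jet $\gamma\in\mathscr X_m(f,\Sigma)$ centered at the origin of $X=\bC^d$ can be written uniquely as $\gamma(t)=ta_1+t^2a_2+\ldots+t^ma_m$ with $a_i\in\bC^d$. For each $k\ge m$, homogeneity of $f_k$ gives $f_k(\gamma(t))=t^kf_k(a_1+ta_2+\ldots+t^{m-1}a_m)\in t^k\bC[t]$, so modulo $t^{m+1}$ only the $k=m$ summand contributes and the contact condition $f(\gamma)(t)\equiv t^m\pmod{t^{m+1}}$ reduces to $f_m(a_1)=1$. Hence
$$\mathscr X_m(f,\Sigma)\simeq F\times\bC^{d(m-1)},$$
where $F=\{f_m=1\}\subset\bC^d$. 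By Euler's identity, $f_m$ has no critical points off $\{f_m=0\}$, so $F$ is a smooth complex manifold of dimension $d-1$. Combining Künneth along the affine factor with Poincaré duality for $F$ then yields
$$H_c^{*}(\mathscr X_m(f,\Sigma),\bZ)\simeq H_c^{*-2d(m-1)}(F,\bZ)\simeq H_{2(dm-1)-*}(F,\bZ).$$

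For the second isomorphism, assume $f$ has an isolated singularity and apply McLean's spectral sequence \eqref{eqMcL} to an $m$-separating log resolution $h\colon Y\to X$ factoring through the blow-up $\pi\colon Y_0\to\bC^d$ of the origin. I would show that only the exceptional divisor $E_1\simeq\bP^{d-1}$ of $\pi$ lies in $S_m$: the strict transform of $X_0$ is not in $A$ since it does not map into $\Sigma=\{0\}$; and for any further exceptional divisor $E'$ created to achieve SNC (necessarily lying over $E_1\cap\tilde X_0$), the local expression $\pi^*f=y_1^m(f_m+y_1f_{m+1}+\ldots)$ shows that the order of vanishing of $f\circ h$ along $E'$ is at least $m+1$, so $m_{E'}\nmid m$. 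Thus $S_m=\{1\}$, the first page of \eqref{eqMcL} is concentrated in a single column, and the spectral sequence degenerates. A direct computation gives $k_1=1$ and $\nu_1=d$, and the degree-$m$ cyclic cover $\tilde{E_1^\circ}\to E_1^\circ\simeq\bP^{d-1}\setminus\{f_m=0\}$ is identified with $F$ via the $\mu_m$-quotient $(x_1,\ldots,x_d)\mapsto[x_1:\ldots:x_d]$. Substituting these data in \eqref{eqMcL} gives $HF^n(\phi^m,+)\simeq H_{-d-1-n}(F,\bZ)$, and the substitution $n=*-2dm-d+1$ produces the second isomorphism.

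I expect the main technical point to be the choice of $m$-separating log resolution in which $E_1$ is the unique element of $S_m$; as sketched above this rests on the local form of $\pi^*f$ and an induction on the steps of the resolution showing that each new exceptional divisor acquires multiplicity strictly greater than $m$. Everything else is either an elementary calculation with jets or a single invocation of Poincaré duality, so I anticipate no further difficulty.
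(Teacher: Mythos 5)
Your proof is correct. For the second isomorphism you follow essentially the same route as the paper: build an $m$-separating log resolution through the blow-up of the origin, observe that every exceptional divisor over the origin other than the first one has multiplicity $\geq m+1$ (hence not dividing $m$), so $S_{m,p}$ is concentrated in a single column, the spectral sequence \eqref{eqMcL} degenerates, and the cyclic cover $\tilde{E_1^\circ}$ is identified with $F=\{f_m=1\}$. The computation $k_1=1$, $\nu_1=d$ and the degree bookkeeping check out.

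For the first isomorphism, however, you take a genuinely different and more elementary route. The paper deduces it by plugging the same $S_{m,p}$-analysis into Theorem~\ref{main}, so both halves of the proposition come out of the degeneration of a spectral sequence. You instead observe directly that writing a jet centered at the origin as $\gamma(t)=ta_1+\cdots+t^ma_m$ and using homogeneity, the contact condition collapses to $f_m(a_1)=1$, giving the closed-form identification $\mathscr X_m(f,\Sigma)\simeq F\times\bC^{d(m-1)}$, after which K\"unneth and Poincar\'e duality finish the job. This is cleaner and does not require the cyclic-cover description of $\tilde{E_1^\circ}$ nor Theorem~\ref{main} at all; it also makes transparent why the answer is the homology of the Milnor fiber of the initial form. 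What it loses, compared to the paper's uniform treatment, is the parallelism between the two halves: in the paper both sides are computed by the ``single-column'' degeneration argument, which is what motivates the conjectural comparison of the two spectral sequences, whereas your Part~1 stands outside that framework. One small point worth making explicit in your Part~2: you should say that the log resolution is chosen so that all post-blow-up centers lying over the origin are contained in $E_1\cap\tilde X_0$ (and its successive strict transforms), and that the $m$-separating blow-ups of Lemma~\ref{lemMsep} only increase multiplicities; otherwise one could in principle blow up a point of $E_1^\circ$ and produce a second divisor of multiplicity exactly $m$. The paper makes the same tacit choice, so this is a matter of exposition, not a gap.
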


Recall that Zariski's Problem A, the multiplicity conjecture, states that the multiplicity is an embedded topological invariant of a hypersurface singularity. On the other hand, Zariski's Problem B, see \cite{Za}, has counterexamples: there exist embedded topologically equivalent hypersurface singularities (even in families) such that the topology of the tangent cones changes drastically, see \cite{J1,J2}. However, inspired by Proposition \ref{propMult}, we observed that in all these examples the homology of the Milnor fiber of the tangent cone stays the same. Therefore we dare to conjecture that the same is true in general:

\begin{conjecture}
Let $f, g:(\bC^d,0)\ra (\bC,0)$ be two germs of holomorphic functions. If $f$ and $g$ are embedded topologically equivalent, then the Milnor fibers of their initial forms are homotopy equivalent. 
\end{conjecture}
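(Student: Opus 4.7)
The strategy is to combine Proposition~\ref{propMult}, which identifies $H_c^*(\mathscr X_m(f,\{0\}),\bZ)$ with the homology of the Milnor fiber $F_{f_m}=\{f_m=1\}$ when $m$ is the multiplicity of $f$, with a topological invariance statement for contact loci. Concretely, the plan is to establish three things: (i) $m_f=m_g=:m$ under embedded topological equivalence, which is Zariski's Problem A; (ii) the graded group $H_c^*(\mathscr X_m(f,\{0\}),\bZ)$ is an embedded topological invariant of the germ $f$; and (iii) the resulting isomorphism of the homologies of the Milnor fibers of two homogeneous polynomials of the same degree can be promoted to a homotopy equivalence.

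Step (i) is a deep open problem that must be taken as an auxiliary hypothesis, since it is established only in classical low-dimensional settings. The heart of the argument is step (ii). One promising approach is to first refine the given homeomorphism of pairs $(\bC^d, f^{-1}(0))\cong (\bC^d, g^{-1}(0))$ to a bi-Lipschitz or semi-algebraic equivalence, and to argue that under such a refinement the truncated arc-spaces defining contact loci admit compatible semi-algebraic maps, because Lipschitz germs control orders of contact with the zero locus. An alternative is to seek an intrinsic description of $H_c^*(\mathscr X_m(f,\{0\}),\bZ)$ in terms of the nearby-cycles complex of $f$ on the blow-up of the origin, where the formalism of constructible sheaves is robust under embedded topological equivalence. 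For step (iii), the Milnor fiber $F_{f_m}$ is an affine smooth variety of complex dimension $d-1$, and when $\{f_m=0\}\subset\bP^{d-1}$ has only isolated singularities and $d\ge 3$ it is simply connected; in that case a homology isomorphism produced by an actual map --- for instance obtained from a degree-preserving deformation interpolating $f_m$ and $g_m$ whose intermediate polynomials stay embedded topologically equivalent to the endpoints --- would upgrade to a homotopy equivalence via the homological Whitehead theorem.

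The main obstacle is step (ii). Contact loci are genuinely algebraic objects: a topological homeomorphism of germs does not by itself induce any map of jet or arc spaces, and existing computational tools, including Theorem~\ref{main}, rely on a choice of log resolution which is not a topological invariant. Bridging this gap --- either via a Lipschitz refinement of the topological equivalence, or via a purely sheaf-theoretic expression of contact-locus cohomology in data that is visibly topologically invariant --- appears to be the central conceptual leap. A secondary difficulty arises in step (iii) when $F_{f_m}$ is not simply connected: there the raw homology isomorphism need not lift to a homotopy equivalence, and one may initially have to restrict to classes of germs for which simple-connectivity of the Milnor fiber of the tangent cone is known.
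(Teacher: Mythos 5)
This statement is labeled a \emph{conjecture} in the paper, and the paper supplies no proof of it --- only the remark that it holds in the special case where $f$, $g$, and their initial forms all have isolated singularities, by Lê--Ramanujam. There is therefore no ``paper proof'' against which your argument can be checked, and your own write-up is candid that it is not a proof either: step (i) invokes Zariski's multiplicity conjecture, which is open; step (ii) --- the topological invariance of $H_c^*(\mathscr X_m(f,\{0\}),\bZ)$ --- is explicitly flagged as the central unproved conceptual leap; and step (iii) would only yield a homotopy equivalence under simply-connectedness hypotheses. So what you have is a research program, not an argument, and you have correctly identified where it breaks down.

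Two remarks on the content of the sketch. First, even if steps (i) and (ii) were granted, Proposition~\ref{propMult} gives you an isomorphism of homology groups of the two Milnor fibers, not a map between the spaces; your step (iii) acknowledges this, but it is worth stressing that ``homology isomorphism produced by an actual map'' is a nontrivial extra ingredient that the contact-locus machinery does not hand you. Second, you might compare with the route the paper actually gestures at for the isolated case: when $f$ has an isolated singularity and $f_m$ does too, the Milnor number of $f$ is forced to be $(m-1)^d$, so equality of Milnor numbers (an honest consequence of embedded topological equivalence) pins down the multiplicity without appealing to Zariski's Problem A, and then the Milnor fibers $\{f_m=1\}$ and $\{g_m=1\}$ are diffeomorphic smooth affine hypersurfaces of the same degree. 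That is a much shorter and fully rigorous argument for a special case; your more ambitious strategy, if it could be completed, would cover nonisolated tangent-cone singularities, which is precisely where the known counterexamples to Zariski's Problem B live and where the conjecture is interesting. As it stands, though, you should present this as a plan of attack, not a proof.
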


The conjecture holds if $f, g$, and their initial forms have isolated singularities, by \cite[Corollary 2.4]{LR}.

\section{Proof of the main results}

We denote by $\cL(X)$ the space of arcs on $X$. Recall that an arc on $X$ is a morphism $\gamma:\Spec \Ct\to X$ of $\bC$-schemes.  We denote by 
$$
\pi_m:\cL(X)\to \cL_m(X)\quad\text{ and }\quad \pi_{m}^l:\cL_l(X)\to \cL_m(X)
$$
the truncation morphisms, for $0\le m\le l$. We let from now on $$\sX_m:=\sX_m(f,\Sigma),$$ and
$$
\sX_m^\infty  :=\pi_m^{-1}(\sX_m )\quad\text{ and }\quad  \sX_m^l :=(\pi_m^l)^{-1}(\sX_m ).
$$

For an arc $\gamma$ on $X$, let $\gamma(0):=\pi_0(\gamma)$ denote the center of $\gamma$, that is, the image of the closed point of $\Spec \Ct$ under $\gamma$, and by $f(\gamma)(t)$ we denote the power series associated to the composition $f\circ \gamma$.

If $\gamma\in \mathscr X_{m}^\infty $, then clearly $f_{red}(\gamma)(t)\ne 0$, that is, the generic point of the arc $\gamma$ lies in $X\setminus X_0$. Thus, by the valuative criterion of properness for the map $h:Y\ra X$, there exists a unique lifting of $\gamma$ to an arc $\tilde{\gamma}$ of $Y$. We define
$$\mathscr X_{m,i}^\infty :=\{\gamma\in \mathscr X_{m}^\infty  \mid \tilde{\gamma}(0)\in E_i^{\circ}\}.$$

\begin{lemma}\label{lemD}
There is a decomposition into mutually disjoint subsets
$$\mathscr X_{m}^\infty =\bigsqcup_{i\in S_m} \mathscr X_{m,i}^\infty .$$
Moreover, each $ \mathscr X_{m,i}^\infty $ with $i$ in $S_m$ is a constructible cylinder in $\cL(X)$, that is, the inverse image under $\pi_m$ of a constructible subset of $\cL_m(X)$.
\end{lemma}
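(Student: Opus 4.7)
The lemma has two parts: the disjoint decomposition and the constructible-cylinder property. The first uses only the $m$-separating assumption; the second relies on standard motivic-integration machinery.

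\textbf{Step 1 (decomposition).} Let $\gamma\in\sX_m^\infty$ with unique lift $\tilde\gamma\in\cL(Y)$. Since $\gamma(0)\in\Sigma\subset X_0$, we have $\tilde\gamma(0)\in h^{-1}(X_0)=E$, so the set $I:=\{j\in S\mid \tilde\gamma(0)\in E_j\}$ is non-empty. If $I$ contained two distinct elements $i_1,i_2$, then in local coordinates at $\tilde\gamma(0)$ with $y_j$ a local equation for $E_j$ we would have $f\circ h=u\prod_{j\in I}y_j^{m_j}$ for a local unit $u$, whence
\[
m \;=\; \ord_t(f\circ h)(\tilde\gamma) \;=\; \sum_{j\in I}m_j\,\ord_t y_j(\tilde\gamma)\;\ge\;m_{i_1}+m_{i_2}\;>\;m,
\]
contradicting the $m$-separating hypothesis. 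So $\tilde\gamma(0)\in E_i^\circ$ for a unique $i$, and the identity $m=m_i\,\ord_t y_i(\tilde\gamma)$ forces $m_i\mid m$ with $\ord_t y_i(\tilde\gamma)=k_i$. Since $\Sigma\subset X_0$ and $h$ is a log resolution, $h^{-1}(\Sigma)_{\red}$ is a union of some of the $E_j$; combined with $\tilde\gamma(0)\in h^{-1}(\Sigma)$, this yields $E_i\subset h^{-1}(\Sigma)$, hence $i\in A$, and so $i\in S_m$.

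\textbf{Step 2 (cylinder property).} Introduce the companion set
\[
\tilde\sX_{m,i}^\infty:=\{\tilde\gamma\in\cL(Y)\mid \tilde\gamma(0)\in E_i^\circ\text{ and }(f\circ h)(\tilde\gamma)\equiv t^m\pmod{t^{m+1}}\}.
\]
Locally, with $f\circ h=u\,y_i^{m_i}$, the defining conditions translate into $\tilde\gamma(0)\in E_i^\circ$, $\ord_t y_i(\tilde\gamma)=k_i$, and the normalization $u(\tilde\gamma(0))\cdot\mathrm{lc}(y_i(\tilde\gamma))^{m_i}=1$; all involve only $\pi_{k_i}(\tilde\gamma)$, so $\tilde\sX_{m,i}^\infty$ is a cylinder in $\cL(Y)$ whose image in $\cL_{k_i}(Y)$ is constructible, and on it the Jacobian has constant order $k_i(\nu_i-1)$. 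By Step 1, $h_\infty$ restricts to a bijection $\tilde\sX_{m,i}^\infty\to\sX_{m,i}^\infty$. The Denef--Loeser change-of-variables / truncation lemma---a cylinder in $\cL(Y)$ with constant Jacobian order projects under $h_\infty$ to a constructible cylinder in $\cL(X)$, piecewise trivially with fibre $\bA^{k_i(\nu_i-1)}$---then yields $\sX_{m,i}^\infty=\pi_m^{-1}(W_i)$ for a constructible $W_i\subset\cL_m(X)$.

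\textbf{Main obstacle.} Step 1 is clean once the $m$-separating trick is isolated. The real work is in Step 2: showing that the image cylinder in $\cL(X)$ already stabilizes at level $m$, rather than at some larger truncation. This requires matching the Jacobian contribution $k_i(\nu_i-1)$ against the contact order $m=k_i m_i$ along $E_i$, and using the $m$-separating assumption to ensure that $\pi_m(\gamma)$ already determines the component $i$ containing $\tilde\gamma(0)$. This level-matching, together with bookkeeping the open conditions $y_j(\tilde\gamma)(0)\ne 0$ for $j\ne i$ across varying charts, is the principal technical point.
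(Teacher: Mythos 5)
Your Step 1 is correct and matches the paper's argument word for word: the $m$-separating condition rules out $\tilde\gamma(0)$ lying on two components, the divisibility $m_i\mid m$ falls out, and $i\in A$ follows since $h^{-1}(\Sigma)$ is an SNC divisor. Your Step 2 takes a mildly different route: you work directly with the lift $\tilde\sX_{m,i}^\infty\subset\cL(Y)$, observe it is a cylinder on which the Jacobian order of $h$ is constant equal to $k_i(\nu_i-1)$, and invoke the Denef--Loeser change-of-variables / truncation machinery. The paper instead cites \cite[Theorem~A]{ELM} and the identity $\mathscr X_{m,i}^\infty = h_\infty(\Cont^\mu(E))\cap\mathscr X_m^\infty$ with $\mu_i=k_i$ and $\mu_j=0$ otherwise. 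These are essentially the same idea packaged differently (ELM's Theorem~A is a refinement of the Denef--Loeser lemma tailored to contact loci); both are valid, and your computation that the cylinder level on $Y$ is $k_i$ is correct since $(f\circ h)(\tilde\gamma)\equiv t^m \pmod {t^{m+1}}$ reduces, near $E_i^\circ$, to $\ord_t y_i(\tilde\gamma)=k_i$ plus a normalization of the leading coefficient.

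One caution, though, about the ``main obstacle'' you flag. The truncation lemma you invoke does \emph{not} give stabilization at level $m$: it gives $h_\infty(A)=\pi_l^{-1}(h_l\pi_l(A))$ for $l$ sufficiently large relative to the Jacobian order, and indeed the paper's own Remark~\ref{remEL} records the bound $l\ge\max\{2k_i(\nu_i-1),\,k_i(\nu_i-1)+m\}$, which is typically larger than $m$. So your asserted conclusion $\sX_{m,i}^\infty=\pi_m^{-1}(W_i)$ with $W_i\subset\cL_m(X)$ is not actually delivered by the argument you give, and you correctly sense that it would require extra work. In fact the phrase ``inverse image under $\pi_m$'' in the lemma's statement appears to be a slip: the paper's own proof only establishes the constructible-cylinder property at some level, and everything downstream (Lemma~\ref{lm21} onward) works at level $l$, not $m$. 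So you should either drop the level-$m$ claim (proving only ``cylinder at some level,'' which is what is used) or supply an additional argument that the component $E_i$ containing $\tilde\gamma(0)$ is already determined by $\pi_m(\gamma)$; as written your proof does neither, and the latter is a genuinely nontrivial claim that the paper itself does not make.
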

\begin{proof} If $\gamma$ is in $\mathscr X_{m}^\infty $, $\tilde\gamma(0)$ must lie in some $E_i$ with $i\in A$. If $\tilde\gamma(0)$ lies also in $E_j$ for some $j\in S$ with $i\ne j$, then  $\tilde\gamma(0)\in E_i\cap E_j$ and so $E_i\cap E_j \ne 0$. Since the resolution is $m$-separating, $m_i+m_j>m$. Hence $\gamma$ has contact order $\ge m_i+m_j>m$ with $f$, which is a contradiction. Thus $\tilde\gamma(0)\in E_i^\circ$ and the disjoint decomposition follows.

Equivalently, the decomposition follows from \cite[Theorem A]{ELM}, by noting that 
$$\mathscr X_{m,i}^\infty =h_\infty (\text{Cont}^{\mu}(E))\cap \mathscr X_{m}^\infty $$
in the notation of {\it loc. cit.}, see Section \ref{secApp}  below, where
$$
h_\infty:\cL(Y)\ra \cL(X)
$$
is the induced map on arc spaces, $\mu=(\mu_j)_{j\in S}$ with $\mu_j=0$ if $j\ne i$ and $\mu_i=k_i$, 
and 
$$
\text{Cont}^{\mu}(E) =\{\lambda\in \cL(Y)\mid \text{ord}_\lambda E_j = \mu_j\text{ for all }j\in S\}.
$$
Moreover  $h_\infty (\text{Cont}^{\mu}(E))$ is a constructible cylinder in $\cL(X)$ by {\it loc. cit.}, and hence $\mathscr X_{m,i}^\infty $ is also a constructible cylinder. 
\end{proof} 

\begin{rem}\label{remEL} We can say more precisely that each $ \mathscr X_{m,i}^\infty $ with $i$ in $S_m$ is the pull-back of a constructible subset of $\cL_l(X)$ for any $l\ge \max\{2k_i(\nu_i-1), k_i(\nu_i-1)+m\}$. This follows from the proof of \cite[Corollary 1.7]{ELM}.
\end{rem}

Let now $l$ be a positive integer so that each $ \mathscr X_{m,i}^\infty $ with $i$ in $S_m$ is the pull-back of a constructible subset of $\cL_l(X)$. We let $$\mathscr X_{m,i}^l :=\pi_l(\mathscr X_{m,i}^\infty ).$$ Then $\mathscr X_{m,i}^l $ is a constructible subset of $\cL_l(X)$ such that
$$
\mathscr X_{m,i}^\infty  = \pi_l^{-1}(\mathscr X_{m,i}^l )
$$
and
$$\mathscr X_{m}^l =\bigsqcup_{i\in S_m} \mathscr X_{m,i}^l .$$
We construct a filtration $F_p\mathscr X_m^\infty $ of $\mathscr X_m^\infty $. For an integer $p$, we let
$$F_{p}\mathscr X_m^\infty := \bigsqcup_{i\in S_m,\, w_ik_i\geq -p} \mathscr X_{m,i}^\infty ,$$
$$\ F_{(p)}\mathscr X_m^\infty := F_{p}\mathscr X_{m}\setminus F_{p-1}\mathscr X_{m}=\bigsqcup_{i\in S_{m,p}}\mathscr X_{m,i}^\infty .$$
We define similarly a filtration  $F_p\mathscr X_m^l $ of $\mathscr X_m^l $ by replacing $\infty$ with $l$.

\begin{lemma}\label{lm21} Given $m> 0$, for all $l\gg 0$ we have: the set $F_p\mathscr X_m^l $ is Zariski closed in $\mathscr X_m^l $ for every integer $p$.
\end{lemma}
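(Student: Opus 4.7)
The plan is to re-express the filtration index $w_ik_i$ as the order of vanishing along $\gamma$ of a single ideal sheaf on $X$. Set
$$
\mathcal I \;:=\; h_*\mathcal O_Y(W)\;=\;h_*\mathcal O_Y\Big(-\sum_{j\in S} w_j E_j\Big),
$$
which, since $w_j\ge 0$ and $h_*\mathcal O_Y=\mathcal O_X$ ($h$ being a birational proper morphism with $X$ smooth), is a coherent ideal sheaf on $X$. The hypothesis that $W$ is relatively very ample implies relative global generation of $\mathcal O_Y(W)$: the natural map $h^*\mathcal I\twoheadrightarrow\mathcal O_Y(W)$ is surjective, equivalently $\mathcal I\cdot\mathcal O_Y=\mathcal O_Y(W)$.

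For any $\gamma\in\sX_m^\infty$ with unique lift $\tilde\gamma$ to $Y$ (via the valuative criterion, exactly as in the proof of Lemma \ref{lemD}), the identity above gives
$$
\mathrm{ord}_\gamma\mathcal I \;=\; \mathrm{ord}_{\tilde\gamma}\mathcal O_Y(W) \;=\; \sum_{j\in S} w_j\,\mathrm{ord}_{\tilde\gamma}E_j.
$$
On the stratum $\sX_{m,i}^\infty$ with $i\in S_m$, the proof of Lemma \ref{lemD} identifies $\mathrm{ord}_{\tilde\gamma}E_j=\delta_{ij}k_i$, so the right-hand side collapses to $w_ik_i$. Consequently
$$
F_p\sX_m^\infty \;=\; \{\gamma\in\sX_m^\infty\,:\,\mathrm{ord}_\gamma\mathcal I\ge -p\}.
$$

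The filtration only jumps at the finitely many values $p\in\{-w_ik_i:i\in S_m\}\subset[-N,0]$, where $N:=\max_{i\in S_m}w_ik_i$ (taking $N=0$ if $S_m=\emptyset$). I would choose $l$ simultaneously large enough that each $\sX_{m,i}^\infty=\pi_l^{-1}(\sX_{m,i}^l)$ (possible by Remark \ref{remEL}) and $l\ge N$. For any such $l$ and any relevant $p$, the condition ``$\mathrm{ord}_{\gamma_l}\mathcal I\ge -p$'' is well-defined and Zariski closed on $\mathcal L_l(X)$: locally where $\mathcal I$ is generated by $g_1,\dots,g_r$, it amounts to the simultaneous vanishing of the coefficients of $t^0,\dots,t^{-p-1}$ in each $g_\alpha(\gamma_l)\in\bC[t]/(t^{l+1})$, and this patches over an affine cover of $X$. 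Intersecting with $\sX_m^l$ then realises
$$
F_p\sX_m^l \;=\; \sX_m^l\cap\{\gamma_l\in\mathcal L_l(X):\mathrm{ord}_{\gamma_l}\mathcal I\ge -p\}
$$
as a Zariski closed subset of $\sX_m^l$.

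The step I anticipate requiring the most care is the passage from ``$W$ is relatively very ample'' to the equality $\mathcal I\cdot\mathcal O_Y=\mathcal O_Y(W)$: one needs relative base-point freeness of $\mathcal O_Y(W)$ over the (non-projective) base $X$, which follows from relative very ampleness together with the properness of $h$ to ensure coherence of $\mathcal I$ and to apply the standard surjection $h^*h_*\mathcal F\twoheadrightarrow\mathcal F$ for relatively globally generated $\mathcal F$. Once this identification is in place, every other step is a routine manipulation of jets.
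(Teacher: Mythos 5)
Your proof is correct and follows essentially the same route as the paper's: both define the ideal $\mathcal{I}$ from $\mathcal{O}_Y(W)$ (the paper works affine-locally with global sections, you use the pushforward sheaf $h_*\mathcal{O}_Y(W)$ directly), use relative global generation to get $\mathrm{ord}_\gamma\mathcal{I}=\mathrm{ord}_{\tilde\gamma}(-W)$, and realize $F_p\sX_m^\infty$ as the sublocus where $\mathrm{ord}_\gamma\mathcal{I}\ge -p$. You phrase the closedness condition directly at the $l$-jet level rather than reducing to the arc level via the trivial-fibration observation as the paper does, and you are a bit more careful in flagging that ``generated by global sections'' should really be ``relatively globally generated over $X$'' — both are minor organizational improvements, not a different method.
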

\begin{proof}
For $l\gg 0$ we have that $F_p\mathscr X_m^\infty $ is the pullback of $F_p\mathscr X_m^l$ by the truncation map  $\pi_l:\cL(X)\to \cL_l(X)$.
Since $X$ is smooth, it admits a finite cover by affine Zariski open subsets such that each of them is an etale open subset of $\mathbb{C}^d$. Since the assertion is local in $X$ for the Zariski topology, we can assume that $X$ itself is an \'etale open subset of $\mathbb{C}^d$. Then the projection $\pi_l$ is a trivial fibration. Hence, it is enough to prove that $F_{p}\mathscr X_m^\infty$ is closed in $\mathscr X_{m}^\infty $.

Since $X$, $Y$ are smooth and $h$ is proper birational, we have $h_*\cO_Y\simeq\cO_X$. Since $X$ is affine there is an isomorphism of  rings of global regular functions
$$
h^\#:\Gamma(X,\cO_X) \xrightarrow{\sim}\Gamma(Y,\cO_Y),\quad \phi\mapsto \phi\circ h.
$$
Since $-W=\sum_{i\in S}w_iE_i$ is an effective divisor, $\mathcal{O}_Y(W)$ is a sheaf of ideals of $\cO_Y$. One has thus an ideal of regular functions on $X$
$$
 \mathcal{I} :=(h^\#)^{-1}(\Gamma(Y,\cO_Y(W))).
$$ 

If $\gamma$ is an arc on $X$ not completely lying in $X_0$, then its lift $\tilde{\gamma}$ to an arc of $Y$ satisfies
$$\text{ord}_\gamma \mathcal{I} =\text{ord}_{\tilde{\gamma}}(-W),$$
since $\cO_Y(W)$ is generated by its global sections. Then the set $F_{p}\mathscr X_m^\infty $ can be expressed as 
$$F_{p}\mathscr X_m^\infty =\mathscr X_{m}^\infty\cap \{\gamma\in \cL(X)  \mid \ord_{\gamma} (\mathcal{I}) \geq -p\},$$
which is closed in $\mathscr X_{m}^\infty $.

\end{proof}

\begin{rem}\label{remBBB}
One can determine a precise lower-bound for $l$ from $m$, $m_i$, and $w_i$, similarly to Remark \ref{remEL}. From now on we fix $l$ as in Lemma \ref{lm21}. 
\end{rem}

\begin{lemma}\label{lm22}
For any integer $p$ and any $i\in S_{m,p}$, the set  $\mathscr X_{m,i}^l $ is Zariski closed in  $F_{(p)}\mathscr X_{m}^l $.
\end{lemma}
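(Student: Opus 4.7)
The plan is to adapt the ideal-sheaf argument from the proof of Lemma \ref{lm21}. Within $F_{(p)}\mathscr{X}_m^l$ every component $\mathscr{X}_{m,j}^l$ with $j\in S_{m,p}$ has the same value of $w_jk_j=-p$, so the original ideal $\mathcal I$ no longer separates them; the idea is to break this symmetry by perturbing $W$.

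For each fixed $i\in S_{m,p}$, consider $W^{(i)} := NW-E_i$ on $Y$ for an integer $N\gg 0$. Since $\mathcal O_Y(W)$ is relatively very ample, a standard application of relative Serre vanishing ensures that $\mathcal O_Y(W^{(i)})$ is relatively very ample for $N$ large enough, while
\[
-W^{(i)} = (Nw_i+1)E_i + \sum_{j\neq i} Nw_j E_j
\]
remains effective. Working locally on an affine open of $X$ as in the proof of Lemma \ref{lm21}, we obtain an ideal sheaf
\[
\mathcal I^{(i)} := (h^\#)^{-1}\bigl(\Gamma(Y,\mathcal O_Y(W^{(i)}))\bigr)\subseteq \Gamma(X,\mathcal O_X),
\]
which by relative very ampleness of $W^{(i)}$ satisfies $\mathcal I^{(i)}\cdot\mathcal O_Y = \mathcal O_Y(W^{(i)})$. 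Hence for any arc $\gamma$ not contained in $X_0$,
\[
\ord_\gamma \mathcal I^{(i)} = \ord_{\tilde\gamma}(-W^{(i)}) = (Nw_i+1)\,\ord_{\tilde\gamma}(E_i) + \sum_{j\neq i} Nw_j\,\ord_{\tilde\gamma}(E_j).
\]

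Restricting to $\gamma\in\mathscr{X}_{m,j}^\infty$ with $j\in S_{m,p}$ and using $\ord_{\tilde\gamma}(E_k) = k_j\delta_{jk}$ together with $w_jk_j=-p$, the right-hand side collapses to $-Np + k_i\delta_{ij}$. Since $k_i\ge 1$, this gives
\[
\mathscr{X}_{m,i}^\infty = F_{(p)}\mathscr{X}_m^\infty \cap \bigl\{\gamma\in\cL(X) \,\big|\, \ord_\gamma \mathcal I^{(i)} \geq -Np+1\bigr\},
\]
and the order condition on the right cuts out a Zariski closed subset of $\cL(X)$ that is visible on $l$-jets as soon as $l$ is at least (roughly) $-Np+1$. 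Enlarging the $l$ of Remark \ref{remBBB} if necessary so that this bound holds for every $i\in S_{m,p}$ (a finite set), we realise $\mathscr{X}_{m,i}^l$ as the intersection of $F_{(p)}\mathscr{X}_m^l$ with a Zariski closed subset of $\cL_l(X)$, proving the lemma.

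The main technical point is to guarantee relative very ampleness of $W^{(i)}$ for $N\gg 0$ (a standard consequence of the openness of relative ampleness together with the fact that sufficiently large tensor powers of a relatively ample line bundle are relatively very ample) and the attendant cylinder-level bookkeeping to pick a single $l$ that simultaneously detects the order conditions imposed by each $\mathcal I^{(i)}$.
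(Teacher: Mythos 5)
Your argument is correct, and it is genuinely different from the paper's. The paper proves Lemma~\ref{lm22} by contradiction: it applies the curve selection lemma to get a wedge $\alpha:(\bC^2,0)\ra X$, resolves the indeterminacy of $h^{-1}\circ\alpha$, compares the intersection products $[\sigma^*L_s]\cdot\bar\beta^*W$ via rational equivalence, and invokes the Kleiman ampleness criterion to force the exceptional coefficients $b_j$ to vanish. Your proof instead perturbs the auxiliary divisor: you replace $W$ by $W^{(i)}=NW-E_i$, note that for $N\gg 0$ the divisor $-W^{(i)}$ stays effective and $\cO_Y(W^{(i)})$ is relatively globally generated (by Serre, applied to the coherent sheaf $\cO_Y(-E_i)$ twisted by the relatively ample $\cO_Y(W)$), and then re-run the order computation of Lemma~\ref{lm21}. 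Within $F_{(p)}\sX_m^\infty$ all components have $w_jk_j=-p$, so $\ord_\gamma\mathcal I^{(i)}=-Np$ off $\sX_{m,i}^\infty$ and $=-Np+k_i\ge -Np+1$ on it, giving the closed set identity directly.

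Two small points worth tightening. First, what the argument actually uses is that $\cO_Y(W^{(i)})$ is relatively \emph{globally generated} over an affine open of $X$ (so that local sections are $\cO_Y$-combinations of global sections, yielding $\ord_\gamma\mathcal I^{(i)}=\ord_{\tilde\gamma}(-W^{(i)})$); very ampleness is more than is needed and requires the extra step that a globally generated bundle tensored with a very ample one is very ample. Second, the choice of $N$ (and hence the bound $l\gtrsim -Np+1$) depends on $i$ and on $p$, so one should make explicit that $S_{m,p}$ is nonempty for only finitely many $p$ and finitely many $i$, allowing a single uniform $N$ and $l$; you do gesture at this. The upshot is a trade-off: your route is more elementary (no wedges, no indeterminacy resolution, no intersection theory), but at the cost of inflating the auxiliary multiplicities $Nw_j$ and therefore the jet level $l$, whereas the paper's proof works with the original $W$ and hence with the smaller $l$ already fixed in Remark~\ref{remBBB}. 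Both proofs ultimately exploit the relative ampleness of $W$, one through Kleiman's criterion, the other through Serre's theorem.
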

\begin{proof}
Suppose by contradiction that $\mathscr X_{m,i}^l $ is not closed in $F_{(p)}\mathscr X_{m}^l $ for some $i\in S_{m,p}$. Then there exist $j\in S_{m,p}$ different than $i$, and $\gamma\in \mathscr X_{m,j}^l $ such that $\gamma$ is in the closure $\ol{\mathscr X_{m,i}^l }$ of $\sX_{m,i}^l $ in $F_{(p)}\mathscr X_{m}^l $. Since $\mathscr X_{m,i}^l $ is a constructible set, the usual curve selection lemma holds, see \cite{Mi}. That is, there exists a complex analytic curve germ
$$\alpha\colon (\mathbb{C},0)\to (\ol{\mathscr X_{m,i}^l },\gamma),\quad s\mapsto \alpha(s)$$
such that $\alpha(0)=\gamma$, and $\alpha(s)\in \mathscr X_{m,i}^l $ for all $s\neq 0$ close to $0$.

Fix a local section of the truncation morphism $\pi_l:\cL(X)\ra\cL_l(X)$ in a neighborhood of $\gamma$. Via this section, we view now $\gamma$ as an arc on $X$, and thus $\alpha$ defines a complex analytic surface germ, i.e. a wedge,
$$\alpha\colon (\mathbb{C}^2,(0,0))\to (X,\gamma(0)),\quad (t,s)\mapsto \alpha(t,s)$$
such that
\begin{itemize}
\item[(a)] $\alpha_0(t):=\alpha(t,0)=\gamma$,
\item[(b)] $\alpha_s(t):=\alpha(t,s)$ is an arc lifting to $Y$ with center on $E^{\circ}_i$ for all $s\neq 0$.
\end{itemize}
Consider the following diagram
\begin{displaymath}
\xymatrix{
&Y\ar[d]^{h}\ar@{<--}[dl]_{\beta} \\
\mathbb{C}^2\ar[r]_{\alpha}&X}
\end{displaymath}
which defines the meromorphic map $\beta=h^{-1}\circ \alpha$. The meromorphic map $\beta$ cannot be holomorphic. If so, then $\beta(t,s)$ would equal $\tilde{\alpha}_s(t)$ for all $s,t$, where $\tilde{\alpha}_s$ is the unique lifting of $\alpha_s$ to $Y$. The latter is however not even continuous in $s$: the lifting $\tilde{\alpha}_0$ of $\alpha_0$ has center $\tilde{\alpha}_0(0)\in E^{\circ}_j$, and the lifting $\tilde{\alpha}_s$ of $\alpha_s$ has center $\tilde{\alpha}_s(0)\in E^{\circ}_i$ for all  $s\neq 0$. Hence the map $\beta$ has non-trivial locus of indeterminacy, which, by a theorem of Remmert \cite[p.333]{Re}, is a complex analytic subspace of codimension $\ge 2$ since $\bC^2$ is normal. By Hironaka, the locus of indeterminacy of $\beta$ can be resolved by a sequence of  blow ups:
\begin{displaymath}\label{diag1}
\xymatrix{
Z\ar[r]^{\bar{\beta}}\ar[d]_{\sigma}&Y\ar[d]^{h}\ar@{<--}[dl]_{\beta} \\
\mathbb{C}^2\ar[r]_{\alpha}&X.
}
\end{displaymath}
Here $Z$ can be defined as a log resolution $Z\ra \bC^2\times_XY$ of the locus where the natural holomorphic map $\bC^2\times_XY\ra \bC^2$ fails to be biholomorphic.

Let $F=\sigma^{-1}(0)=\cup_{j\in J} F_j$ be the exceptional divisor of $\sigma$. Let $L_{s}$ be the line $\{(t,s)\mid t\in \bC\}$ in $\mathbb{C}^2$. Let $$\sigma^{*}L_0=\sum_{j\in J} b_jF_j+\tilde{L}_0$$ be the total transform of $L_0$ under $\sigma$, where $\tilde{L}_0$ is the strict transform of $L_0$. For $s\ne 0$, $L_s$ does not meet the origin, and so $\sigma^{*}L_s=\tilde{L}_s$. The composition $$Z\to \mathbb{C}^2\overset{\mathrm{pr}_2}{\to}\mathbb{C}$$  gives by definition a rational equivalence between the cycles determined by its fibers. Hence the cycles $\sigma^{*}L_s$ and $\sigma^{*}L_0$ are rationally equivalent for any $s$. As a consequence we have an equality of intersection products
$$[\sigma^{*}L_s]\cdot \bar{\beta}^{*}W=[\sigma^{*}L_0]\cdot \bar{\beta}^{*}W.$$
Note that, $\bar{\beta}^{*}W$ is supported on the exceptional divisor $F$, since $W$ is supported on the exceptional divisor $E$. Hence $\bar{\beta}^{*}W$ is a compact cycle, and thus the intersection product is well-defined. We have the following equalities
$$[\sigma^{*}L_s]\cdot \bar{\beta}^{*}W=[\tilde{L}_s]\cdot \bar{\beta}^{*}W=\bar{\beta}_{*}\tilde{L}_s\cdot [W]=\mathrm{ord}_{\tilde{\alpha}_s}(W)=p,$$
and 
\begin{align*}
[\sigma^{*}L_0]\cdot \bar{\beta}^{*}W&=[\tilde{L}_0]\cdot \bar{\beta}^{*}W+\sum b_j[F_j]\cdot \bar{\beta}^{*}W\\
&=\bar{\beta}_{*}[\tilde{L}_0]\cdot W+(\sum b_j \bar{\beta}_{*}F_j)\cdot W\\
&=\mathrm{ord}_{\tilde{\gamma}}(W)+(\sum b_j \bar{\beta}_{*}F_j)\cdot W\\
&=p+(\sum b_j \bar{\beta}_{*}F_j)\cdot W,
\end{align*}
where $\tilde{\gamma}=\tilde{\alpha}_0$.
Hence we obtain that $$(\sum b_j \bar{\beta}_{*}F_j)\cdot W=0.$$
Since the $b_j$ are non-negative, by Kleiman ampleness criterion we obtain $b_j=0$ for all $j$ such that $\bar{\beta}(F_j)$ is not collapsed to a point. But this means that the map $\beta$ has no indeterminacy, which is a contradiction.
\end{proof}

For the following we will fix possibly-higher value for $l$ than up to now, one such that Lemma \ref{lemLoo1} applies.

\begin{lemma}\label{lm23}
For every $i\in S_m$, $\mathscr X^l_{m,i} $ is a smooth complex variety of dimension $d(l+1)-k_i\nu_i-1$, and it has the same homotopy type as $\tilde{E}_i^{\circ}$.
\end{lemma}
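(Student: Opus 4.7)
The plan is to display $\sX_{m,i}^l$ as a tower of Zariski-locally-trivial affine bundles over $\tilde E_i^\circ$, which will simultaneously yield smoothness, the dimension count, and the homotopy equivalence. The first leg of the tower comes from an explicit local analysis upstairs on $Y$; the second leg comes from the change-of-variables technique for $h_l$.

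I would start by working locally near a point $P\in E_i^\circ$. Choose coordinates $(y_1,\ldots,y_d)$ on an open $U\subset Y$ with $E_i\cap U=\{y_1=0\}$, so that $f\circ h=u\cdot y_1^{m_i}$ for a unit $u$. Each $\gamma\in\sX_{m,i}^\infty$ has a unique lift $\tilde\gamma\in\cL(Y)$ with $\tilde\gamma(0)\in E_i^\circ$ by the valuative criterion (as in the proof of Lemma \ref{lemD}), so $\sX_{m,i}^\infty$ is identified with a subset $\widetilde{\sX}_{m,i}^\infty$ of $\{\lambda\in\cL(Y):\ord_\lambda E_i=k_i,\ \ord_\lambda E_j=0\text{ for }j\ne i\}$. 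Writing $\tilde\gamma^\ast(y_1)=a_{k_i}t^{k_i}+a_{k_i+1}t^{k_i+1}+\ldots$ and $\tilde\gamma^\ast(y_j)=b_{j,0}+b_{j,1}t+\ldots$ for $j\ge 2$, a direct expansion of $(f\circ h)(\tilde\gamma)=u(\tilde\gamma(t))\cdot \tilde\gamma^\ast(y_1)^{m_i}$ shows that the lower-order coefficients of $f(\gamma)$ vanish automatically, and the contact condition $f(\gamma)\equiv t^m\pmod{t^{m+1}}$ collapses to the single equation $u(\tilde\gamma(0))\cdot a_{k_i}^{m_i}=1$, which is precisely the equation defining the cyclic cover $\tilde E_i^\circ\to E_i^\circ$. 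Sending $\tilde\gamma_l$ to the pair $(a_{k_i},\tilde\gamma(0))$ thereby exhibits $\widetilde{\sX}_{m,i}^l:=\pi_l^Y(\widetilde{\sX}_{m,i}^\infty)$ as a Zariski-locally-trivial affine bundle of rank $dl-k_i$ over $\tilde E_i^\circ$, with fiber coordinates the free parameters $(a_{k_i+1},\ldots,a_l)$ and $(b_{j,n})_{j\ge 2,\ 1\le n\le l}$.

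For the second leg, by \cite[Theorem A, Corollary 1.7]{ELM} together with the choice of $l$ provided by Lemma \ref{lemLoo1}, the restriction of $h_l$ to $\widetilde{\sX}_{m,i}^l$ is a Zariski-locally-trivial $\bA^{e_\mu}$-bundle onto $\sX_{m,i}^l$, with $e_\mu=\sum_j\mu_j(\nu_j-1)=k_i(\nu_i-1)$. Composing the two bundles gives
$$\sX_{m,i}^l\xleftarrow{\bA^{k_i(\nu_i-1)}}\widetilde{\sX}_{m,i}^l\xrightarrow{\bA^{dl-k_i}}\tilde E_i^\circ,$$
from which $\sX_{m,i}^l$ inherits smoothness (a Zariski-locally-trivial affine-bundle quotient of a smooth variety is smooth, as locally the total space splits as base times $\bA^N$), the dimension
$$\dim\sX_{m,i}^l=(d-1)+(dl-k_i)-k_i(\nu_i-1)=d(l+1)-k_i\nu_i-1,$$
and the homotopy type of $\tilde E_i^\circ$ since both affine bundles have contractible fibers. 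The main delicate point is verifying that the change-of-variables fibration is genuinely Zariski-locally trivial (not merely piecewise trivial) on the stratum indexed by $\mu=(k_i,0,\ldots,0)$; this should follow from the explicit local description of $h$ as a composition of blowups of smooth centers, combined with the quantitative lower bound on $l$ already used in Remark \ref{remEL} and Lemma \ref{lemLoo1}.
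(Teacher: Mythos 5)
Your argument matches the paper's: both exhibit $\sX^l_{m,i}$ via a correspondence $\sX^l_{m,i}\xleftarrow{h_l}\sY^l_{m,i}\xrightarrow{\tilde\pi_0^l}\tilde E_i^\circ$, where the left arrow is the Zariski-locally-trivial $\bA^{k_i(\nu_i-1)}$-bundle supplied by Lemma~\ref{lemLoo1} (hence the appendix) and the right arrow is a Zariski-locally-trivial $\bA^{dl-k_i}$-bundle recording the leading coefficient of the lifted arc along $E_i$ together with its center, and then read off smoothness, dimension, and homotopy type from this double affine bundle. The only difference is cosmetic: you verify triviality of the right arrow by expanding coefficients and observing that the contact condition collapses to the cyclic-cover equation $u(\tilde\gamma(0))\,a_{k_i}^{m_i}=1$, while the paper reaches the same trivialization by pulling back to the cyclic cover, absorbing an $m_i$-th root of $u$ into a new \'etale coordinate $\tilde y_i=\tilde u\,y_i$, and invoking compatibility of jet schemes with \'etale maps.
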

\begin{proof}
We consider the induced morphisms
$$
h_\infty:\cL(Y)\ra \cL(X)\quad\text{and}\quad h_l\colon \cL_l(Y)\to \cL_l(X)$$ and denote  
$$\mathscr Y^\infty_{m,i}:=h_\infty^{-1}\left( \mathscr X^\infty_{m,i} \right)
\quad\text{and}\quad
\mathscr Y^l_{m,i}:=h_l^{-1}\left( \mathscr X^l_{m,i} \right).$$ 
Note that 
$$\mathscr Y^\infty_{m,i}=\{{\gamma}\in \cL(Y)\mid (f\circ h)({\gamma})= t^{m} + \text{(higher order terms)} \in\Ct, \text{ and }\gamma(0)\in E_i^\circ\},$$

The morphism 
$$\pi_0:\mathscr Y^\infty_{m,i} \ra {E}_i^\circ,\quad \gamma\mapsto \gamma(0)$$
factorizes through the cyclic cover $\tilde{E}_i^\circ\ra E_i^\circ$ and  a morphism
$$
\tilde{\pi}_0:\mathscr Y^\infty_{m,i} \ra \tilde{E}_i^\circ
$$
which we define as follows. Let $U$ be any open neighborhood in $Y$ of a point in $E^{\circ}_i$ such that $f\circ h=u\cdot y_i^{m_i}$ in $U$, where $y_i$ is a local equation for $E_i$ and $u$ is an invertible regular function on $U$. Then the restriction of $\tilde{\pi}_0$ on the open subset $(\pi_0)^{-1}(U)\cap \mathscr Y^\infty_{m,i}$ is given by
$$\tilde{\pi}_0({\gamma}):=\left(\mathrm{ac}(y_i({\gamma})),{\gamma}(0)\right),$$
where $\mathrm{ac}(y_i({\gamma}))$ denotes the coefficient of the lowest order power of $t$ in the power series $y_i({\gamma})$. To check that the image of $\tilde{\pi}_0$ lies indeed in $\tilde{E}_i^\circ$, note that the power series $(f\circ h)(\gamma)$ is
$$
t^m + \text{(higher order terms)} = u(\gamma)\cdot y_i(\gamma)^{m_i}.
$$
Thus
$$
(\text{ac}(y_i(\gamma)))^{m_i} = (\text{ac}(u(\gamma)))^{-1} = u(\gamma(0))^{-1}.
$$

Define
$$
\tilde{\pi}^l_0:\mathscr Y^l_{m,i}\ra \tilde{E}_i^\circ
$$
similarly to $\tilde{\pi}_0$. Since $\mathscr X^\infty_{m,i} =\pi_l^{-1}(\mathscr X^l_{m,i} )$, one has $\mathscr Y^\infty_{m,i} =\pi_l^{-1}(\mathscr Y^l_{m,i} )$ as well, where we abuse the notation and denote also by $\pi_l$  the map $\cL(Y)\ra\cL_l(Y)$. Since $l$ is very big, cf. Remark \ref{remBBB}, it follows that $\tilde{\pi}_0$ factorizes through $\tilde{\pi}^l_0$.

We consider now the following diagram 
\begin{displaymath}
\xymatrix{
\mathscr Y^l_{m,i}\ar[r]^{h_l\;\;\;}\ar[d]_{\tilde{\pi}_0^l}&\mathscr X^l_{m,i}  \\
\tilde E^{\circ}_i&
}
\end{displaymath}
In this diagram, the morphism $h_l$ is a locally trivial fibration with fiber $\mathbb{C}^{(\nu_i-1)k_i}$, by Lemma \ref{lemLoo1} below. The lemma is then completed by the following observation: the morphism $\tilde{\pi}_0^l$ is  a locally trivial fibration with fiber $\mathbb{C}^{dl-k_i}$.  To prove this claim, fix an open neighborhood $U$ in $Y$ of a point $P_0$ in $E_i^\circ$ as above. Note that $\tilde{E}_i^\circ\cap (\bC\times U)$ is the restriction above $E_i^\circ$ of the \'etale cyclic cover
$$
\tilde{U}=\{(z,P)\in \bC\times U\mid z^{m_i}=u(P)^{-1}\} \xrightarrow{p_2} U.
$$
Let $(z_0,P_0)$ be a fixed point in $\tilde{E}_i^\circ\cap\tilde{U}$ above $P_0$, and let $\Omega$ be a small open neighborhood of $(z_0,P_0)$ in $\tilde{U}$. Note that the projection onto the first coordinate defines a regular invertible function on $\tilde{U}$, whose inverse  we denote by $\tilde{u}$. The function $\tilde{u}$ plays the role of $u^{1/m_i}$, the latter being not necessarily well-defined on $U$; that is, $\tilde{u}$ satisfies $\tilde{u}^{m_i}=u\circ p_2$.  Then $\tilde{y}_i:=\tilde{u}y_i$ is a local equation for $\tilde{E}_i^\circ$ in $\Omega$, and $f\circ h \circ p_2 = \tilde{y}_i^{m_i}$. Since $\tilde{y}_i$ is smooth, it forms part of an \'etale local system of coordinates on $\Omega$. Since  forming of jet schemes is compatible with \'etale morphisms by \cite[Lemma 4.2]{DL99}, it follows that $\tilde{\pi}^l_0$ is trivialized above $\tilde{E}_i^\circ\cap \Omega$ with fiber isomorphic to
$$\{ \gamma \in \cL_l(\Omega) \mid \gamma(0)=(z_0,P_0)\text{ and } \mathrm{ac}(y_i({\gamma})= z_0 \text{ and } \tilde{y}_i^{m_i}(\gamma)\equiv t^{m}\text{ mod } (t^{m+1}) \}\simeq$$ 
$$\simeq\{ \gamma \in \cL_l(\Omega) \mid \gamma(0)=(z_0,P_0)\text{ and } \tilde{y}_i(\gamma)\equiv t^{k_i}\text{ mod } (t^{k_i+1}) \}\simeq$$ 
$$\simeq \{ \gamma \in \cL_l(\bC^d) \mid \gamma(0)=0\text{ and } x_1(\gamma)\equiv t^{k_i}\text{ mod } (t^{k_i+1}) \}  \simeq\bC^{dl-k_i}.$$
\end{proof}

\begin{lemma}\label{lemLoo1} The morphism $h_l: \mathscr Y^l_{m,i}\ra \mathscr X^l_{m,i}$ is a Zariski locally-trivial fibration with fiber $\mathbb{C}^{(\nu_i-1)k_i}$ for $l\gg 0$.
\end{lemma}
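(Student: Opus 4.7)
The plan is to reduce to an explicit computation in \'etale local coordinates. First I would fix a point $P\in E_i^{\circ}$, and choose an \'etale neighborhood $U$ of $P$ in $Y$ with \'etale local coordinates $(y_1,\ldots,y_d)$ such that $y_1=y_i$ is a local equation for $E_i$. Since $h$ is \'etale on $U\setminus E_i$ and $\ord_{E_i}K_{Y/X}=\nu_i-1$, I would then pick \'etale local coordinates $(x_1,\ldots,x_d)$ on $X$ near $h(P)$ so that $h|_U$ is given by regular functions $F_j:=h^{\#}x_j\in\cO(U)$ whose Jacobian determinant satisfies $\det(\partial F_j/\partial y_k)=y_1^{\nu_i-1}\cdot v$ with $v\in\cO(U)^{\times}$. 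Together with the factorization $f\circ h=u\cdot y_1^{m_i}$ from the hypothesis, this is the local model I would analyze.

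In these coordinates, membership $\tilde\gamma\in\sY^l_{m,i}\cap\cL_l(U)$ amounts to $y_{1,0}=\cdots=y_{1,k_i-1}=0$, the normalization $u(\tilde\gamma(0))\,y_{1,k_i}^{m_i}=1$, and open conditions ($y_{1,k_i}\ne 0$, $\tilde\gamma(0)\in E_i^{\circ}$). The morphism $h_l$ is given by $x_j(\gamma)\equiv F_j(y_1(\tilde\gamma),\ldots,y_d(\tilde\gamma))\pmod{t^{l+1}}$. To analyze its fibers, I would fix $\tilde\gamma_0\in h_l^{-1}(\gamma)$ and parametrize other preimages as $\tilde\gamma=\tilde\gamma_0+\delta$. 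Taylor expansion reduces $h_l(\tilde\gamma)\equiv h_l(\tilde\gamma_0)\pmod{t^{l+1}}$ to the linear leading condition $J(\tilde\gamma_0)(t)\cdot\delta(t)\equiv 0\pmod{t^{l+1}}$ plus higher-order corrections, where $J=(\partial F_j/\partial y_k)$. Since $\det J(\tilde\gamma_0)(t)$ has $t$-adic order exactly $(\nu_i-1)k_i$, Smith normal form for $J(\tilde\gamma_0)(t)$ over $\Ct$ identifies the space of $l$-jets $\delta$ solving this linear equation with an affine space of dimension $(\nu_i-1)k_i$, for $l$ large enough.

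To promote this infinitesimal picture to a trivialization I would solve the full nonlinear system iteratively: starting from a first-order $\delta^{(1)}$ in the kernel of $J(\tilde\gamma_0)$, I would successively adjust by elements killing higher Taylor corrections. The process terminates in finitely many steps modulo $t^{l+1}$, and the final solution depends regularly on the kernel parameters and on $\tilde\gamma_0$. This would yield a regular isomorphism $h_l^{-1}(\gamma)\cap\cL_l(U)\xrightarrow{\sim}\bA^{(\nu_i-1)k_i}$ varying regularly with $\gamma$, and hence a trivialization over the image $h_l(\cL_l(U)\cap\sY^l_{m,i})\subset\sX^l_{m,i}$. Covering $E_i^{\circ}$ by such \'etale neighborhoods would produce a cover of $\sX^l_{m,i}$ over which $h_l$ is trivial, giving the asserted local triviality.

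The main obstacle is the precision bookkeeping in the Newton-type iteration, which dictates the lower bound on $l$ (roughly $l\gg 2(\nu_i-1)k_i$, matching the estimates of \cite{ELM} and \cite{DL99}). A cleaner, more conceptual alternative would be to invoke the theorem of Ein--Lazarsfeld--Musta\c{t}\u{a}~\cite{ELM} directly: the restriction of $h_l$ to a cylinder of fixed contact multi-order $\mu$ is a piecewise trivial fibration with fiber $\bA^{\sum\mu_j(\nu_j-1)}$, and in our setting $\mu$ is supported on the single divisor $E_i$ with $\mu_i=k_i$, so no further stratification of the image is needed and Zariski local triviality follows immediately.
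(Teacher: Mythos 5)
There is a genuine gap, and it is exactly the point the paper is at pains to handle. Your Newton-iteration computation in \'etale coordinates near a point of $E_i^{\circ}$ identifies the fibers of $h_l$ correctly, and is in spirit the same as the argument of Denef--Loeser and Looijenga. But that argument, done chart by chart, only yields a \emph{piecewise} (constructibly) trivial fibration: you do not explain why the image $h_l\bigl(\cL_l(U)\cap\sY^l_{m,i}\bigr)$ is Zariski \emph{open} in $\sX^l_{m,i}$. In general, the image of a Zariski open subset of the source under $h_l$ is only constructible, and without a separate argument the trivializing pieces in the target need not be open. Your second, ``cleaner'' alternative compounds the error: you assert that, because the contact multi-order $\mu$ is supported on the single divisor $E_i$, ``no further stratification of the image is needed and Zariski local triviality follows immediately'' from \cite{ELM}. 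That is a non sequitur. The stratification in the piecewise-triviality statements of \cite{DL99,ELM,Loo} has nothing to do with how many components carry nonzero contact order; it comes from the way local trivializations on different charts of $Y$ are patched, and it does not go away when $\mu$ is concentrated on one divisor. Indeed the paper explicitly flags this: the appendix exists precisely to upgrade the piecewise triviality (``deemed `easy' and not included'' in \cite[Lemma 9.2]{Loo}) to genuine Zariski local triviality.

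The paper's proof of this lemma therefore does not do a direct local computation at all. It observes that $h_l: \sY^l_{m,i}\to\sX^l_{m,i}$ is a base change of $\pi_l(\Cont^\mu(E))\to h_l\pi_l(\Cont^\mu(E))$ and invokes Theorem~\ref{lemLoo} from the appendix. That theorem is proved by factoring $h$ into a chain of single smooth blowups. For each blowup of an affine space along a linear center (Lemma~\ref{lemBC1}), the standard affine charts $Y_i$ of the blowup give trivializing opens, and the crucial point — the one your proof skips — is the verification that $\mu_l\pi_l(\Cont^m(E\cap Y_i))$ is Zariski open in $\mu_l\pi_l(\Cont^m(E))$, which is done by identifying it explicitly as the locus $\{\ord_\gamma x_i\le m\}$, an open condition on $l$-jets. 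This chart-wise openness is then transported to a general smooth blowup by \'etale base change (Lemma~\ref{lemBlow}) and to the whole resolution by composing. To repair your argument you would have to supply an analogous openness statement for the images of your \'etale charts, which in effect amounts to redoing the appendix; it does not come for free from \cite{ELM}.
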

\begin{proof}  As in the proof of Lemma \ref{lemD}, the map $\mathscr Y^l_{m,i}\ra \mathscr X^l_{m,i}$ is obtained by base-change from the map 
$$
\pi_l(\Cont^\mu(E))\ra h_l\pi_l(\Cont^\mu(E)),
$$
since $\mathscr X^l_{m,i}=h_l\pi_l(\Cont^\mu(E))\cap \mathscr X^l_m$ for $l\gg 0$.
Thus the claim follows immediately from Theorem \ref{lemLoo} in the Appendix.
\end{proof}

\begin{theorem}\label{main2} Let $f:X\to\bC$ be a non-invertible regular function on a smooth complex algebraic variety $X$ of dimension $d$, $\Sigma$ a non-empty closed subset of $f^{-1}(0)$, $m> 0$ an integer, and $h:Y\to X$ an $m$-separating log resolution of $(f,\Sigma)$. For all $l\gg 0$, there is a cohomological spectral sequence 
$$E_1^{p,q}=\bigoplus_{i\in S_{m,p}} H_{2(d(l+1)-k_i\nu_i-1)-(p+q)} (\tilde{E^{\circ}_i},\mathbb{Z})$$
converging to $H^{p+q}_c\left(\mathscr X_{m}^l(f,\Sigma),\mathbb{Z}\right)$.
\end{theorem}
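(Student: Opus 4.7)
The plan is to assemble the filtration spectral sequence for compactly supported cohomology associated to the closed filtration $F_p\mathscr X_m^l$ constructed above, and then to identify its $E_1$-page using Lemmas \ref{lm22} and \ref{lm23}.

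First, I would invoke the standard machinery of a filtration by closed subspaces. By Lemma \ref{lm21}, each $F_p\mathscr X_m^l$ is Zariski closed in $\mathscr X_m^l$, and the filtration is finite since $w_ik_i$ takes only finitely many values on $S_m$. The long exact sequences for the closed pairs $(F_p\mathscr X_m^l,F_{p-1}\mathscr X_m^l)$, namely
$$\cdots\ra H_c^n(F_{(p)}\mathscr X_m^l,\bZ)\ra H_c^n(F_p\mathscr X_m^l,\bZ)\ra H_c^n(F_{p-1}\mathscr X_m^l,\bZ)\ra H_c^{n+1}(F_{(p)}\mathscr X_m^l,\bZ)\ra\cdots,$$
assemble into an exact couple yielding a cohomological spectral sequence with
$$E_1^{p,q}=H_c^{p+q}(F_{(p)}\mathscr X_m^l,\bZ)\quad\Rightarrow\quad H^{p+q}_c(\mathscr X_m^l,\bZ),$$
where convergence is automatic by finiteness of the filtration.

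Second, I would identify the graded pieces. Set-theoretically, $F_{(p)}\mathscr X_m^l=\bigsqcup_{i\in S_{m,p}}\mathscr X_{m,i}^l$. By Lemma \ref{lm22}, each $\mathscr X_{m,i}^l$ is closed in $F_{(p)}\mathscr X_m^l$. Since $S_{m,p}$ is finite and the pieces are pairwise disjoint, each $\mathscr X_{m,i}^l$ is also open in $F_{(p)}\mathscr X_m^l$, being the complement of the finite union of the other closed pieces. Hence the decomposition is a topological disjoint union, giving
$$H_c^{p+q}(F_{(p)}\mathscr X_m^l,\bZ)=\bigoplus_{i\in S_{m,p}}H_c^{p+q}(\mathscr X_{m,i}^l,\bZ).$$

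Third, I would apply Poincar\'e duality followed by the homotopy equivalence from Lemma \ref{lm23}. That lemma describes $\mathscr X_{m,i}^l$ as a smooth complex variety of complex dimension $d(l+1)-k_i\nu_i-1$ carrying the homotopy type of $\tilde E_i^\circ$. Smoothness over $\bC$ provides a canonical orientation, so Poincar\'e duality yields
$$H_c^{p+q}(\mathscr X_{m,i}^l,\bZ)\simeq H_{2(d(l+1)-k_i\nu_i-1)-(p+q)}(\mathscr X_{m,i}^l,\bZ)\simeq H_{2(d(l+1)-k_i\nu_i-1)-(p+q)}(\tilde E_i^\circ,\bZ),$$
where the last isomorphism uses the homotopy equivalence from Lemma \ref{lm23}. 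Summing over $i\in S_{m,p}$ produces the announced $E_1$-term.

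The main obstacle has already been absorbed into the preparatory lemmas: Lemma \ref{lm21} guarantees the filtration is by closed subsets (so that the exact couple exists and degenerates in finite range), Lemma \ref{lm22} provides the crucial clopen splitting of each graded piece (so the $E_1$-term splits as a direct sum indexed by $S_{m,p}$), and Lemma \ref{lm23} delivers smoothness, the correct complex dimension, and the homotopy type of $\tilde E_i^\circ$. Given these inputs, Theorem \ref{main2} is the formal assembly described above; the only point requiring care is matching the real-dimension shift in Poincar\'e duality with the homological index $2(d(l+1)-k_i\nu_i-1)-(p+q)$ appearing in the statement.
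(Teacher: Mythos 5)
Your proposal is correct and follows essentially the same route as the paper: the filtration spectral sequence from Lemma \ref{lm21}, the direct-sum splitting of the graded pieces via Lemma \ref{lm22}, and Poincar\'e duality together with the homotopy equivalence from Lemma \ref{lm23}. Your explicit clopen argument for why $F_{(p)}\mathscr X_m^l$ is a topological disjoint union of the $\mathscr X_{m,i}^l$ is a slightly more careful phrasing of what the paper summarizes as the ``irreducible decomposition,'' but it is the same idea.
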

\begin{proof}
By Lemma \ref{lm21}, there is a finite filtration of $\mathscr X^l_{m} =\mathscr X_{m}^l(f,\Sigma)$ by Zariski closed subsets 
$$\mathscr X^l_{m} =F_0\mathscr X^l_{m} \supset F_{-1}\mathscr X^l_{m} \supset\cdots\supset F_{p}\mathscr X^l_{m} \supset F_{p-1}\mathscr X^l_{m} \supset\cdots.$$
This induces a spectral sequence converging to $H^{*}_c(\mathscr X^l_{m} ,\mathbb{Z})$ such that $$E_1^{p,q}=H^{p+q}_c(F_{(p)}\mathscr X^l_{m} ,\mathbb{Z}).$$ 
It follows from Lemma \ref{lm22} that $F_{(p)}\mathscr X^l_{m} =\bigsqcup_{i\in S_{m,p}}\mathscr X^l_{m,i} $ is the irreducible decomposition of $F_{(p)}\mathscr X^l_{m} $. Hence we have $$H^{p+q}_c(F_{(p)}\mathscr X^l_{m} ,\mathbb{Z})\simeq\bigoplus_{i\in S_{m,p}} H^{p+q}_{c} (\mathscr X^l_{m,i} ,\mathbb{Z}).$$ 
By the smoothness of $\mathscr X^l_{m,i} $ proven in Lemma \ref{lm23} and Poincare duality, this is in turn isomorphic to
$$\bigoplus_{i\in S_{m,p}} H_{2(d(l+1)-k_i\nu_i-1)-(p+q)} (\mathscr X^l_{m,i} ,\mathbb{Z}).$$
 Since $\mathscr X^l_{m,i} $ and $\tilde{E}^{\circ}_i$ are homotopy equivalent by Lemma \ref{lm23}, we obtain
$$H^{p+q}_c(F_{(p)}\mathscr X^l_{m} ,\mathbb{Z})\simeq \bigoplus_{i\in S_{m,p}} H_{2(d(l+1)-k_i\nu_i-1)-(p+q)} (\tilde{E}^{\circ}_i,\mathbb{Z}).$$
\end{proof}

\begin{proof}[Proof of Theorem \ref{main}] 
Since $X$ is smooth it admits a cover $\{X_i\}_{i\in I}$ by Zariski open subsets such that each $X_i$ is an \'etale open subset of $\bC^d$. Then, for each $i\in I$ there is an isomorphism $\cL_l(X_i)\simeq \cL_m(X_i)\times\bC^{d(l-m)}$ such that the truncation morphism  $\pi^l_m:\cL_l(X_i)\ra\cL_m(X_i)$ is the first projection. We conclude that $\pi^l_m:\cL_l(X)\ra\cL_m(X)$ is a Zariski locally trivial fiber bundle, with fiber $\bC^{d(l-m)}$. Since $\sX^l_m =(\pi^l_m)^{-1}\sX_m $, we have that $\sX^l_m$ is a locally trivial fiber bundle over $\sX_m $ with fiber $\bC^{d(l-m)}$. The trivializing open subsets are  $\sX_m\cap \cL_m(X_i)$ for $i\in I$.

Let $c_l$ and $c_m$ be the constant maps sending $\sX^l_m$ and $\sX_m $ to a point respectively. The spectral sequence of the composition of the functors $R(c_l)_{!}=R(c_m)_{!}R(\pi^l_m)_!$ has $$E_2^{p,q}=H^p_c(\sX_m,R^q(\pi^l_m)_!\bZ_{\sX^l_{m}})$$
and converges to 
$$
H^{p+q}_c(\sX_m^l,\bZ).
$$

Observe that $H^q_c(\bC^{d(l-m)},\bZ)\simeq \bZ$ if $q=2d(l-m)$ and it is equal to $0$ otherwise. Then  $R^q(\pi^l_m)_!\bZ_{\sX^l_{m}}$ is $0$ if $q\ne d(l-m)$. By Poincar\'e-Verdier duality \cite[Theorem 3.3.10]{Di} over the ring of coefficients $\bZ$, the rank one local system $R^{2d(l-m)}(\pi^l_m)_!\bZ_{\sX^l_{m}}$ is the dual of the local system  with fibers $H_0(\bC^{d(l-m)},\bZ)= \bZ\cdot [\textup{pt}]$ given by $\pi^l_m$ on $\sX_m$, where $[\textup{pt}]$ is the $0$-homology class of a point in a fiber of $\pi^l_m$. Since the affine space is path-connected, every loop in $\sX_m$ will send $[\textup{pt}]$ to itself, and hence $R^{2d(l-m)}(\pi^l_m)_!\bZ_{\sX^l_{m}}$ is the constant local system $\bZ$ on $\sX_m$. Therefore we obtain
$$
H^*_c(\sX^l_m ,\bZ) \simeq H^{*-2d(l-m)}_c(\sX_m ,\bZ).
$$
Thus, replacing $q$ with $q+2d(l-m)$ in Theorem \ref{main2}, one obtains the claim.
\end{proof}

\begin{proof}[Proof of Proposition~\ref{degen}]
The spectral sequence of a filtration by algebraic closed subsets of a complex algebraic variety can be lifted to the category of mixed Hodge structures by \cite[Lemma 3.8]{A}. Then the spectral sequence from Theorem \ref{main2}, after tensoring with $\bQ$, is a spectral sequence of $\bQ$-mixed Hodge structures once the Tate twists are taken into account. The rational structure is necessary in order to lift the Poincar\'e duality, cf. \cite{Ak}. More precisely, the proof of Lemma~\ref{lm23} shows that that there is an isomorphism of $\bQ$-mixed Hodge structures
$$H^k_c(\mathscr \sX^l_{m,i} ,\bQ)\simeq H^{2d_i-k}(\tilde{E}_i^\circ,\bQ)^\vee\otimes\bQ(-d_i)$$
for every integer $k$, where $d_i=\dim \sX^l_{m,i} $. Since $\tilde{E}^{\circ}_i$ is a smooth variety of dimension $d-1$, it follows that for non-zero groups $H^{k}_c(F_{(p)}\mathscr \sX^l_{m} ,\mathbb{Q})$, the weights on are contained in 
$$[k-d+1,k] \cap [0,k].$$ 
Since this interval has length at most $d$ and the differentials of the spectral sequence are morphisms of mixed Hodge structures, the result follows.
\end{proof}

\begin{proof}[Proof of Proposition~\ref{propMult}]
We prove that for every $q$, there is at most one non-zero term among $E^{p,q}_1$, and respectively among $'\!E^{p,q}_1$. 

Let $h_0: Y_0\ra X=\bC^d$ be the blowup at the origin. Let $E_0$ be the exceptional divisor. Let $D=f^{-1}(0)$, and let $\tilde{D}$ be the strict transform of $D$. Then
$$
\tilde{D}\cap E_0 \simeq f_m^{-1}(0)\subset \bP^{d-1}\simeq E_0.
$$
Moreover, since $f_m$ is a homogeneous polynomial, the closed subset $F=\{f_m=1\}$ of $\bC^d$ is the Milnor fiber of $f_m$. The associated map $$F\ra \bP^{d-1}\setminus f_m^{-1}(0)$$ to the complement of the zero locus in the projective space of $f_m$, sending a point to the line connecting it to the origin, displays $F$ as the cyclic cover $\tilde{E}_0^\circ$ of degree $m$ of $E_0^\circ=E_0\setminus \tilde{D}\cap E_0$.

Let $h:Y\ra X$ be an $m$-separating log resolution of $(f,\Sigma)$ factoring through $Y_0$, with $\Sigma$ consisting of the origin only. Abusing slightly the notation, we denote by $E_0$ the strict transform of $E_0$. Then the multiplicity of every exceptional divisor $E_i$ in $E=h^{-1}(0)$ is $>m$, and it is equal to $m$ only if $i=0$. Therefore the index set
$$
S_{m,p}=\left\{
\begin{array}{cc}
\{0\} & \text{ if } p=-w_0,\\
\emptyset & \text{ if } p\ne -w_0. 
\end{array}
\right.
$$
One obtains thus for every $q$ that
$$
E_1^{-w_0,q}= H_{2(dm-1)-(-w_0+q)}(\tilde{E}_0^\circ,\bZ) \simeq H_c^{-w_0+q}(\mathscr X_m(f,\Sigma),\bZ)
$$
by Theorem \ref{main},
and
$$
'\!E^{-w_0,q}_1= H_{-1-d-(-w_0+q)}(\tilde{E}_0^\circ,\bZ) \simeq  HF^{-w_0+q}(\phi^m,+)
$$
by (\ref{eqMcL}). The claim now follows.
\end{proof}


Finally, we provide a proof of \cite[Lemma 2.4]{Mclean} since the proof in {\it loc. cit.} contains a false claim (i.e. the inequality $a_Y-b_Y<a_{Y'}-b_{Y'}$ does not necessarily hold).

\begin{lemma}\label{lemMsep}
Let $X$ be a smooth complex algebraic variety of dimension $d$, $f$ a non-invertible regular function on $X$, $\Sigma$ a non-empty closed subset of $f^{-1}(0)$, and $m>0$ an integer. There exist $m$-separating log resolutions of $(f,\Sigma)$.
\end{lemma}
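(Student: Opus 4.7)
The plan is to start with any log resolution \(h_0\colon Y_0\to X\) of \((f,\Sigma)\)---whose existence is standard---and to iteratively blow up smooth codimension-two centers contained in the bad locus of \(E=h^{-1}(X_0)\) until the \(m\)-separating condition holds. That each such blowup yields again a log resolution of \((f,\Sigma)\) is routine, since the centers sit inside strata of the SNC divisor \(E\) and the blowup map is an isomorphism over \(X\setminus X_0\); so the substance of the argument lies entirely in choosing the centers and showing termination.

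For a log resolution \(h\), I would introduce the pair of invariants
\[
\nu(h) := \min\{\,m_i+m_j \mid i\ne j,\ E_i\cap E_j\ne\emptyset,\ m_i+m_j\le m\,\},
\]
set to \(+\infty\) if no such pair exists, together with
\[
N(h) := \#\{\text{irreducible components of } \bigcup_{\{i,j\}:\, m_i+m_j=\nu(h)} E_i\cap E_j\}.
\]
If \(\nu(h)=+\infty\) then \(h\) is already \(m\)-separating; otherwise I choose a bad pair \((i,j)\) realizing \(\nu(h)\), pick an irreducible component \(Z\) of \(E_i\cap E_j\), and let \(\pi\colon Y'\to Y\) be its blowup with \(h':=h\circ\pi\). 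A direct local computation gives \(\pi^*E_i=E_i'+E'\) and \(\pi^*E_j=E_j'+E'\), so the new exceptional divisor \(E'\) has \(\mathrm{ord}_f E'=m_i+m_j=\nu(h)\). Every pair in \(Y'\) that did not already exist in \(Y\) involves \(E'\), and such a pair \((E',E_k')\) has sum \(m_k+\nu(h)\ge\nu(h)+1\), while old pairs retain their sums; therefore \(\nu(h')\ge\nu(h)\).

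The key observation that makes \(N\) strictly decrease when \(\nu\) stays constant is an SNC codimension count: if \(Z\) were an irreducible component of some other pair \(E_p\cap E_q\) with \(\{p,q\}\ne\{i,j\}\), then \(Z\) would be a codimension-two subvariety contained in \(E_i\cap E_j\cap E_p\cap E_q\), an SNC stratum of codimension at least three---impossible. Hence blowing up \(Z\) removes exactly one component from the union defining \(N(h)\) and adds nothing at level \(\nu(h)\), so either \(\nu(h')>\nu(h)\), or \(\nu(h')=\nu(h)\) and \(N(h')=N(h)-1\). Since \(\nu\) takes values in the finite set \(\{2,\dots,m\}\cup\{+\infty\}\) and at each fixed value of \(\nu\) the nonnegative integer \(N\) strictly decreases, the process terminates after finitely many blowups at an \(m\)-separating log resolution.

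The main obstacle---which is presumably where the invariant \(a_Y-b_Y\) of \cite{Mclean} fails---is that a single blowup may simultaneously create several new bad intersections, one for each \(E_k\) meeting \(Z\), so any unrefined additive counter of bad components can increase. The point of stratifying by the level \(\nu\) is precisely that every bad intersection newly created by the blowup lives at a strictly higher level than the one currently being processed, which lets the level-by-level induction close cleanly.
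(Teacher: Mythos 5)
Your proposal is correct and follows essentially the same strategy as the paper: blow up an irreducible component of a pair $E_i\cap E_j$ realizing the minimal value of $m_i+m_j$ among intersecting pairs, observe that the new exceptional divisor has multiplicity $m_i+m_j$, that all newly created intersecting pairs have strictly larger multiplicity sum, and that exactly one bad component at the current minimal level is removed; terminate by a lexicographic (minimal level, count at that level) argument. The paper phrases the same induction in the language of the dual complex and stellar subdivisions (citing Koll\'ar), while you make the SNC codimension count explicit; the content is the same.
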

\begin{proof}
Let $h:Y\ra X$ be a log resolution of $(f,\Sigma)$ and let $E=(f\circ h)^{-1}(0)=\sum_{i\in S}m_iE_i$ be the pull-back of the divisor $f^{-1}(0)$ with $E_i$ the irreducible components of $E$. For $I\subset S$, let $E_I=\cap_{i\in I}E_i$.

Let $\Delta$ be the dual complex of the simple normal crossings divisor $E$, whose vertices are labelled by $S$ and for every irreducible component of an $E_I$ with $I\subset S$ one attaches a $(|I|-1)$-dimensional cell. Each vertex $i\in S$ of $\Delta$ comes with the associated multiplicity $m_i$. For a cell $\sigma$ of $\Delta$, the multiplicity is defined $m_\sigma$ as the sum of the multiplicities of its vertices. Define
$$M(\Delta)=\min\{m_\sigma \mid \sigma\text{ is a 1-cell of }\Delta\}$$
and 
$$
\cM(\Delta)=\{ \text{ 1-cells } \sigma\text{ of }\Delta\mid m_\sigma=M(\Delta)\}.
$$

Consider the blowup $Y'$ of $Y$ along a non-empty intersection $E_i\cap E_j$ corresponding to a 1-cell $\sigma$ in $\cM(\Delta)$. Let $\Delta'$ be the associated dual complex. Then $\Delta'$ is the stellar subdivision of $\Delta$ at $\sigma$, see \cite[\S 9]{K}. The multiplicity of the new vertex in $\Delta'$ is $m_\sigma=m_i+m_j$. The multiplicities of the other vertices remain the same. Therefore $M(\Delta')\ge M(\Delta)$. 

If $M(\Delta')=M(\Delta)$, then $\cM(\Delta')= \cM(\Delta)\setminus\{\sigma\}$. We repeat the blowing up process $|\cM(\Delta)|-1$ times. The resulting modification $Y'\ra X$ then satisfies that $M(\Delta')>M(\Delta)$. 

Repeating this process, we have achieve after finitely many steps that $M(\Delta')>m$. \end{proof}

\section{Appendix}\label{secApp}

The goal of this appendix is to prove Theorem \ref{lemLoo}, a particular case of the local triviality claimed in \cite[Lemma 9.2]{Loo} whose proof was deemed ``easy" and not included.

The notation in this appendix is independent of the notation from the introduction.

For a smooth complex variety $X$, an ideal subsheaf $I$ of $\cO_X$ corresponding to a closed subscheme $Z$, and a natural number $m$, we denote the (non-restricted) $m$-contact locus by
$$
\Cont^m(Z)=\Cont^m(I)=\{\gamma\in\cL(X)\mid \ord_\gamma(I)=m\}.
$$
For a reduced closed subscheme with the decomposition $D=\cup_i D_i$ into irreducible components, and for a tuple $\bm=(m_i)_i$ of natural numbers, the (non-restricted) multi-contact loci are defined as
$$
\Cont^\bm(D)=\{\gamma\in\cL(X)\mid \ord_\gamma(D_i)=m_i\text{ for all } i\}.
$$

We recall that by $\pi_l:\cL(X)\ra\cL_l(X)$ we mean the truncation map from arcs to $l$-jets on $X$, and for a morphism $\mu:Y\ra X$, the associated maps on arcs and jets are denoted by $\mu_\infty:\cL(Y)\ra\cL(X)$ and $\mu_l:\cL_l(Y)\ra\cL_l(X)$, respectively.

The contact loci are cylinders since $\Cont^m(Z)=\pi_l^{-1}\pi_l (\Cont^m(Z))$ for $l\ge m$, and $\Cont^\bm(D)=\pi_l^{-1}\pi_l (\Cont^\bm(D))$ for $l\ge\max\{m_i\}$.

\begin{lemma}\label{lemmaBl1}
Let $2\le \nu\le d$ be integers and let $$\mu:Y=\bA^d\ra X=\bA^d$$ be the map given in coordinates by $x_i=y_dy_i$ for $i=1,\ldots, \nu-1$, and $x_i=y_i$ for $i=\nu,\ldots, d$.  Let $m\in\bN$. Then the map
$$\mu_l:\pi_l(\Cont^m(y_d))\ra \mu_l\pi_l(\Cont^m(y_d))$$
is a trivial fibration with fiber $\bC^{(\nu-1)m}$ for $l\ge m$.
\end{lemma}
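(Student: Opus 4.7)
The plan is to make everything completely explicit in polynomial coordinates and exhibit a global algebraic trivialization. Writing an $l$-jet on $Y=\bA^d$ (or on $X$) as a tuple $(\gamma_1,\ldots,\gamma_d)$ with $\gamma_i\in\bC[t]/(t^{l+1})$, the cylinder $\pi_l(\Cont^m(y_d))$ is (for $l\ge m$) the locally closed subvariety of $\cL_l(Y)$ cut out by $\gamma_d=a_m t^m+\cdots+a_l t^l$ with $a_m\neq 0$. In these coordinates $\mu_l$ acts as $(\gamma_1,\ldots,\gamma_d)\mapsto(\gamma_d\gamma_1,\ldots,\gamma_d\gamma_{\nu-1},\gamma_\nu,\ldots,\gamma_d)$ with all products taken modulo $t^{l+1}$. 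The image $U:=\mu_l\pi_l(\Cont^m(y_d))$ then consists of those $\delta$ with $\ord(\delta_d)=m$ and $\ord(\delta_i)\ge m$ for $i<\nu$: necessity is obvious, and sufficiency will follow from the fiber analysis below.

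The fiber over a fixed $\delta\in U$ is controlled by the following observation: in $\gamma_d\gamma_i\bmod t^{l+1}$ the coefficients of $\gamma_i$ in degrees $l-m+1,\ldots,l$ contribute nothing, because their contributions are pushed past degree $l$ by the factor $\gamma_d$ of order $\ge m$. Writing $\gamma_i=\sum_{k=0}^l c_k^{(i)} t^k$ and $\delta_i=\sum_{r=m}^l b_r^{(i)} t^r$, the equation $\gamma_d\gamma_i\equiv\delta_i\pmod{t^{l+1}}$ becomes the upper-triangular linear system $b_r^{(i)}=\sum_{k=0}^{r-m} a_{r-k} c_k^{(i)}$ for $m\le r\le l$, with diagonal entries $a_m\neq 0$. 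Thus $c_0^{(i)},\ldots,c_{l-m}^{(i)}$ are uniquely determined, while $c_{l-m+1}^{(i)},\ldots,c_l^{(i)}$ remain free, giving $m$ free parameters for each of the $\nu-1$ indices, hence fiber $\bC^{(\nu-1)m}$. This simultaneously shows surjectivity onto $U$.

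To globalize, note that each $c_k^{(i)}$ produced by the triangular back-substitution is a polynomial expression in $a_m,\ldots,a_l$ and the coefficients of $\delta_i$, divided by a power of $a_m$, which does not vanish on $U$. These formulas define a morphism
$$\sigma:U\times\bA^{(\nu-1)m}\longrightarrow\pi_l(\Cont^m(y_d))$$
by setting $\gamma_d=\delta_d$, $\gamma_j=\delta_j$ for $\nu\le j<d$, solving for the low-degree coefficients of each $\gamma_i$ with $i<\nu$, and filling in the top $m$ coefficients of each such $\gamma_i$ from the $\bA^{(\nu-1)m}$ factor. By construction $\mu_l\circ\sigma$ is the projection to $U$, and $\sigma$ is a two-sided inverse to the morphism $\gamma\mapsto\bigl(\mu_l(\gamma),\text{top $m$ coefficients of each }\gamma_i\text{ for }i<\nu\bigr)$. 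This exhibits $\mu_l$ as a globally trivial $\bC^{(\nu-1)m}$-bundle over $U$. The only technical point is the algebraicity of the triangular inversion, which is immediate from the formulas; I do not foresee a substantive obstacle, and in fact the argument works for every $l\ge m$, not only for $l\gg 0$.
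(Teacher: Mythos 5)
Your argument is correct and is essentially the paper's second proof of this lemma: both write $l$-jets in explicit coordinates, note that multiplication by the order-$m$ series $\gamma_d$ gives an upper-triangular (Cramer) linear system with nonvanishing diagonal entry $a_m$, and use its unique solvability together with the $m$ free top coefficients of each $\gamma_i$ ($i<\nu$) to produce a global algebraic trivialization. The paper spells this out only for $d=\nu=2$ and remarks that the general case is similar; you have made the general bookkeeping explicit, but the underlying idea is identical.
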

\begin{proof} Note that $\mu:Y\ra X$ is an affine chart of the blowing up of $X$ at a linear subspace of codimension $\nu$, and $y_d$ is a defining function for the exceptional divisor.  Consider the exact sequence
\begin{equation}\label{eqHO2}
\mu^*\Omega_X\xrightarrow{d\mu} \Omega_Y \ra \Omega_{Y/X}\ra 0.
\end{equation}
Then
$$
d\mu=\left(
\begin{array}{cc}
y_d I_{\nu-1} & A\\
O & I_{d-\nu+1}
\end{array}
\right)
$$
where $I_k$ is the $k\times k$ identity matrix, $O$ is a zero matrix, and $A$ is the matrix with only the last row nonzero
$$
A=\left(
O
\begin{array}{c}
y_1\\
\vdots\\
y_{\nu-1}
\end{array}
\right).
$$
Hence, multiplying $d\mu$ on the left by the appropriate invertible matrix, we can effect a change of basis so that
$$
d\mu=\left(
\begin{array}{cc}
y_d I_{\nu-1} & O\\
O & I_{d-\nu+1}
\end{array} \right).$$
For any arc $\gamma\in \Cont^m(y_d)$, one therefore has
$$
\gamma^*(dh)=
\left(
\begin{array}{cc}
u_\gamma t^m I_{\nu-1} & O\\
O & I_{d-\nu+1}
\end{array} \right),
$$
for some invertible $u_\gamma\in \Ct$. 

It is shown in the proof of \cite[Lemma 9.2]{Loo} that the fiber of $$\mu_l:\pi_l(\Cont^m(y_d))\ra \mu_l\pi_l(\Cont^m(y_d))$$ through $\pi_l(\gamma)$ is the vector space
$$
K_l(\gamma):=\Hom_{\Ct}(\gamma^*\Omega_{Y/X}, \Ct/(t^{l+1})).
$$
This is the kernel of the linear map
$$
\Hom_{\Ct}(\gamma^*\Omega_{Y}, \Ct/(t^{l+1})) \ra \Hom_{\Ct}((h\circ\gamma)^*\Omega_{X}, \Ct/(t^{l+1}))
$$
induced $\gamma^*(d\mu)$ via the exact sequence (\ref{eqHO2}).  Hence
$$
K_l(\gamma) =[t^{l-m+1}\Ct/(t^{l+1})]^{\times \nu-1}\times \{0\}^{^{\times d-\nu+1}}
$$
does not depend on the choice of $\gamma$, and it is isomorphic to $\bC^{(\nu-1)m}$ as a $\bC$-vector space.
\end{proof}

\begin{proof}[The second proof]
It suffices to prove the case $d=\nu=2$, because the general case for $d$ and $\nu$ is similar. Let us consider the morphism $\mu: \mathbb A^2\to \mathbb A^2$ given by $(x,y)\mapsto(xy,y)$. By definition, $\Cont^m(y)$ is the subscheme of $\mathscr L(\mathbb A^2)$ consisting of $(\varphi,\psi)$ with $\psi$ of order $m$. Thus we can identify $\pi_l(\Cont^m(y))$ with
$$\{(a_0,\dots,a_l; b_m,\dots,b_l)\in \mathbb A^{2l-m+2} \mid b_m\not=0\},$$
and identify $\mu_l: \pi_l(\Cont^m(y))\ra \mu_l\pi_l(\Cont^m(y_d))$ with 
$$(a_0,\dots,a_l; b_m,\dots,b_l)\mapsto (a_0b_m,a_0b_{m+1}+a_1b_m,\dots,a_0b_l+\cdots+a_{l-m}b_m; b_m, \dots,b_l).$$
We now consider the morphism 	
$$(\mu_l,\pr_{\mathbb A^m}):\pi_l(\Cont^m(y))\ra \mu_l\pi_l(\Cont^m(y_d))\times_k \mathbb A^m$$
sending $(a_0,\dots,a_l; b_m,\dots,b_l)$ to
$$(a_0b_m,a_0b_{m+1}+a_1b_m,\dots,a_0b_l+\cdots+a_{l-m}b_m; b_m, \dots,b_l; a_{l-m+1},\dots, a_l).$$
This morphism is an isomorphism because given $(u_m,\dots,u_l; b_m,\dots,b_l; a_{l-m+1},\dots, a_l)$ in the target, the following system of linear equations, in variables $(a_0,\dots,a_{l-m})$,
\begin{equation*}
\begin{cases}
a_0b_m &=\ u_m\\
a_0b_{m+1}+a_1b_m &=\ u_{m+2}\\
\cdots \cdots &\quad\ \cdots\\
a_0b_l+\cdots+a_{l-m}b_m&=\ u_l
\end{cases}
\end{equation*}
is a Cramer system due to $b_m\not=0$.
\end{proof}

\begin{lemma}\label{lemBC1}
Let $\mu:Y\ra X=\bA^d$ be the blowing up of $X$ along a linear subspace $Z=\bA^{d-\nu}$. Let $E$ be the exceptional divisor. Let $m\in\bN$. Then the map
$$\mu_l:\pi_l(\Cont^m(E))\ra \mu_l\pi_l(\Cont^m(E))$$
is a Zariski locally-trivial fibration with fiber $\bC^{(\nu-1)m}$ for $l\ge m$.
\end{lemma}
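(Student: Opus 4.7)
The plan is to reduce to Lemma \ref{lemmaBl1} via the standard affine cover of the blowup. Recall that $Y=\mathrm{Bl}_Z X$ admits a canonical cover by $\nu$ affine charts $U_1,\ldots,U_\nu$, each isomorphic to $\bA^d$, such that for each $k$ the restriction $\mu|_{U_k}\colon U_k\to X$ has the form described in Lemma \ref{lemmaBl1} (with the blowup coordinate $y_k$ playing the role of $y_d$, after a permutation of indices). In each chart one has $E\cap U_k=\{y_k=0\}$.

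Set $Z_k:=\pi_l(\Cont^m(E))\cap \cL_l(U_k)$ and $V_k:=\mu_l(Z_k)$. Since every jet in $\Cont^m(E)$ has center on $E\subset\bigcup_k U_k$, the opens $\{Z_k\}$ cover $\pi_l(\Cont^m(E))$ and hence $\{V_k\}$ covers $\mu_l\pi_l(\Cont^m(E))$. By Lemma \ref{lemmaBl1}, each restriction $\mu_l|_{Z_k}\colon Z_k\to V_k$ is a trivial fibration with fiber $\bC^{(\nu-1)m}$.

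To promote this to Zariski local triviality on the whole base, two facts must be verified: (i) each $V_k$ is open in $\mu_l\pi_l(\Cont^m(E))$, and (ii) $\mu_l^{-1}(V_k)\cap\pi_l(\Cont^m(E))=Z_k$. For (i), the morphism $\mu_l|_{\pi_l(\Cont^m(E))}$ is a trivial fibration on each of the $Z_k$, hence is smooth, and therefore open onto its image; thus $V_k=\mu_l(Z_k)$ is open in $\mu_l\pi_l(\Cont^m(E))$. For (ii), the decisive observation is that any two jets $y,y'\in\pi_l(\Cont^m(E))$ with $\mu_l(y)=\mu_l(y')$ share the same center in $Y$: inspecting the explicit fiber description in the proof of Lemma \ref{lemmaBl1}, the constant terms $y_i(0)$ are uniquely recovered from the $m$-th-order coefficients of the image $l$-jet via the blowup formulas, which is the familiar statement that the tangent direction of an arc at a point of $Z$ determines the center of its lift in the exceptional fiber $\simeq\bP^{\nu-1}$. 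Consequently $\mu_l(y)\in V_k$ forces the center of $y$ to lie in $U_k$, whence $y\in Z_k$.

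Combining (i) and (ii), the chart-trivializations $Z_k\simeq V_k\times\bC^{(\nu-1)m}$ glue into a Zariski locally trivial structure on $\mu_l\colon\pi_l(\Cont^m(E))\to\mu_l\pi_l(\Cont^m(E))$ with fiber $\bC^{(\nu-1)m}$. The main obstacle is step (ii): verifying that the preimage of a chart-image in the base is exactly the corresponding chart in the source relies on a somewhat delicate center-uniqueness statement that must be extracted from the explicit Cramer-system description in the proof of Lemma \ref{lemmaBl1}.
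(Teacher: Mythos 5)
Your strategy matches the paper's: cover $Y$ by the standard blowup charts $U_k$, apply Lemma~\ref{lemmaBl1} on each chart, and glue. Your key observation --- that the center of the lifted jet in $E\simeq\bP^{\nu-1}\times Z$ is read off from the image $l$-jet via the coefficients of $t^m$ in $x_1(\gamma),\ldots,x_\nu(\gamma)$ --- is exactly the crux of the matter, and your step~(ii) is correct. However, your argument for step~(i), that $V_k=\mu_l(Z_k)$ is open in $\mu_l\pi_l(\Cont^m(E))$, has a real gap. You assert that $\mu_l$ ``is a trivial fibration on each of the $Z_k$, hence is smooth, and therefore open onto its image.'' This is circular: Lemma~\ref{lemmaBl1} gives trivializations $Z_k\simeq V_k\times\bC^{(\nu-1)m}$ with $V_k$ a priori only a \emph{locally closed constructible} subset of the base, and concluding smoothness or openness of the ambient map $\pi_l(\Cont^m(E))\to\mu_l\pi_l(\Cont^m(E))$ already presupposes that the $V_k$ are open there. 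A finite family of locally closed pieces covering a variety need not be an open cover (think of a stratification), and nothing you say rules that out. Step~(i), which you treat as an afterthought, is in fact where the substance of the lemma lies, and you have dismissed it with an unjustified appeal to smoothness while flagging step~(ii) as ``the main obstacle.''

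The fix is to apply the tangent-direction observation you already have to step~(i) as well, which is what the paper does. Since $\mu_l\pi_l(\Cont^m(E))=\pi_l(\Cont^m(Z))$ by \cite[Theorem A]{ELM}, and writing $I_Z=(x_1,\ldots,x_\nu)$, every $\gamma$ in the base satisfies $\ord_\gamma x_j\ge m$ for all $j$ with equality for at least one $j$, and the center of the lift lies in the $k$-th chart $U_k$ if and only if $\ord_\gamma x_k=m$, i.e.\ $\ord_\gamma x_k\le m$. Hence
\[
V_k=\mu_l\pi_l(\Cont^m(E))\cap\{\gamma\mid \ord_\gamma x_k\le m\},
\]
which is manifestly Zariski open (the complement of the closed locus where the coefficients of $t^0,\ldots,t^m$ in $x_k(\gamma)$ all vanish). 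This explicit open-condition description of $V_k$ is the missing step; once it is in place, the rest of your write-up goes through.
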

\begin{proof}
In this case, $\mu_\infty(\Cont^m(E))=\Cont^m(Z)$ since $\mu$ is a log resolution of $(X,Z)$, by \cite[Theorem A]{ELM}. 

There is an open covering $Y=\cup_{i=1}^\nu Y_i$ with $Y_i\simeq \bA^d$ such that the restriction $\mu:Y_i\ra X$ is as in Lemma \ref{lemmaBl1} for a suitable choice of coordinates. For each $i$, the set
$$
\pi_l(\Cont^m(E\cap Y_i))=\{\gamma\in \pi_l(\Cont^m(E))\mid \gamma(0)\in Y_i\}
$$
is open in $\pi_l(\Cont^m(E))$.  

We show now that the set
$
\mu_l\pi_l(\Cont^m(E\cap Y_i))
$
is also open in $\mu_l\pi_l(\Cont^m(E))$.  It consists of truncations $\pi_l(\gamma)$ of arcs $\gamma\in\Cont^m(Z)$ with the tangent line at the center $\gamma(0)$  contained in the normal directions to $Z$ given by $E\cap Y_i$.

Letting $Z$ be the zero locus of $I=(x_1,\ldots,x_\nu)$, where $x_1,\ldots ,x_d$ are coordinate functions on $X$, the last condition means the following. The exceptional divisor $E\simeq\bP^\nu\times Z$ is the projectivization of the normal bundle of $Z$ in $X$, which is trivial. An arc $\gamma$ centered on $Z$ gives a well-defined point of $E$, the center of the unique lifting to an arc on $Y$,
$$
\tilde{\gamma}(0)=\left.\frac{d\gamma}{dt}\right\rvert_{t=0} \times \gamma(0)\quad\in\quad \bP^\nu\times Z=E.
$$
where
$$
\left.\frac{d\gamma}{dt}\right\rvert_{t=0} = \left.\left[\frac{d (x_1(\gamma))}{dt},\ldots ,\frac{d (x_\nu(\gamma))}{dt}\right]\right\rvert_{t=0} \quad\in\quad\bP^\nu
$$
is the point with homogeneous coordinates given by the coefficients of $t^{v-1}$ of the derivatives of $x_i(\gamma)(t)$ with respect to $t$, with $$v=\min\{\ord_\gamma x_i\mid i=1,\ldots,\nu\}=\ord_\gamma(I).$$
Thus, for a fixed $i$ in $\{1,\ldots, \nu\}$, the point $\tilde{\gamma}(0)$ belongs to $E\setminus (E\cap Y_i)$ if and only if the respective homogeneous coordinate of $(d\gamma/dt)\rvert_{t=0}$ is zero. That is, if and only if $\ord_\gamma x_i>v$. The last condition is a Zariski-closed condition on arcs, as well as on $l$-jets for $l\ge v$.

Thus for a fixed $i$ in $\{1,\ldots, \nu\}$,
$$
\mu_l\pi_l(\Cont^m(E\cap Y_i))=\mu_l\pi_l(\Cont^m(E))\cap \{\gamma\in\cL_l(X)\mid \gamma(0)\in Z \text{ and }\ord_\gamma x_i\le m\}
$$
is open in $\mu_l\pi_l(\Cont^m(E))$ for $l\ge m$.

Since
$$
\mu_l:\pi_l(\Cont^m(E\cap Y_i))\ra\mu_l\pi_l(\Cont^m(E\cap Y_i))
$$
is a trivial fibration with fiber $\bC^{(\nu-1)m}$ by Lemma \ref{lemmaBl1}, the claim follows.
\end{proof}

\begin{lemma}\label{lemBlow}
Let $\mu:Y\ra X$ be the blowing up of a smooth variety $X$ along a smooth subvariety $Z$. Let $E$ be the exceptional divisor. Let $m\in\bN$. Then the map
$$\mu_l:\pi_l(\Cont^m(E))\ra \mu_l\pi_l(\Cont^m(E))$$
is a Zariski locally-trivial fibration with fiber $\bC^{(\nu-1)m}$ for $l\ge m$.
\end{lemma}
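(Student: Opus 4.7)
The plan is to reduce Lemma \ref{lemBlow} to Lemma \ref{lemBC1} via étale-local coordinates on $X$ adapted to $Z$. Let $\nu$ denote the codimension of $Z$ in $X$. Fix any point $\gamma \in \mu_l\pi_l(\Cont^m(E))$ and let $x = \gamma(0) \in Z$. Since $Z$ is a smooth codimension-$\nu$ subvariety in the smooth variety $X$, there exists a Zariski open neighborhood $V$ of $x$ in $X$ together with an étale morphism $\phi:V \to \bA^d$ such that $V \cap Z = \phi^{-1}(\bA^{d-\nu})$, where $\bA^{d-\nu}\subset\bA^d$ is a coordinate linear subspace. Since blowing up commutes with flat base change, the restricted blowup $\tilde\mu: \tilde Y := \mu^{-1}(V) \to V$ is obtained from the blowup $\mu_0: Y_0 \to \bA^d$ along $\bA^{d-\nu}$ by base change; i.e., $\tilde Y \cong Y_0 \times_{\bA^d} V$, and the exceptional divisor of $\tilde\mu$ is the preimage of the exceptional divisor $E_0$ of $\mu_0$.

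By \cite[Lemma 4.2]{DL99}, jet schemes commute with étale morphisms, so $\phi_l: \cL_l(V) \to \cL_l(\bA^d)$ is étale and exhibits $\cL_l(V)$ as $\cL_l(\bA^d) \times_{\bA^d} V$. Combining this with the Cartesian square defining $\tilde Y$ yields a Cartesian square
\begin{equation*}
\begin{CD}
\cL_l(\tilde Y) @>{\tilde\mu_l}>> \cL_l(V) \\
@V{\phi'_l}VV @V{\phi_l}VV \\
\cL_l(Y_0) @>{(\mu_0)_l}>> \cL_l(\bA^d)
\end{CD}
\end{equation*}
with étale vertical arrows, and $\pi_l(\Cont^m(E \cap \tilde Y)) = (\phi'_l)^{-1}(\pi_l(\Cont^m(E_0)))$ because the exceptional divisors match under pullback.

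A straightforward chase through this Cartesian square (using the standard fact that surjections lift through Cartesian diagrams) shows that the image $\tilde\mu_l \pi_l(\Cont^m(E \cap \tilde Y))$ equals $\phi_l^{-1}((\mu_0)_l \pi_l(\Cont^m(E_0)))$, and the restricted map
$$\tilde\mu_l: \pi_l(\Cont^m(E \cap \tilde Y)) \to \tilde\mu_l \pi_l(\Cont^m(E \cap \tilde Y))$$
is obtained from the analogous map for $\mu_0$ by (étale) base change along $\phi_l$. By Lemma \ref{lemBC1}, the latter is a Zariski locally-trivial fibration with fiber $\bC^{(\nu-1)m}$ for $l \ge m$. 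Since Zariski-local triviality is preserved under arbitrary base change, so is $\tilde\mu_l$.

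Finally, as $V$ varies over a cover of a neighborhood of $Z$ in $X$ by Zariski opens admitting an étale coordinate map $\phi$ as above, the opens $\mu_l\pi_l(\Cont^m(E)) \cap \cL_l(V) = \tilde\mu_l\pi_l(\Cont^m(E \cap \tilde Y))$ form a Zariski open cover of $\mu_l\pi_l(\Cont^m(E))$, and the local trivializations assemble into global Zariski-local triviality. The only real care needed is in tracking the image through the Cartesian square; everything else is a routine application of the étale base-change formalism for jet schemes.
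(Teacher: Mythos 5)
Your proof is correct and takes essentially the same route as the paper: cover $X$ by opens admitting an \'etale map to $\bA^d$ that sends $Z$ to a coordinate linear subspace, pull the problem back along this \'etale map via compatibility of blow-ups with flat base change and of jet schemes with \'etale morphisms, and invoke Lemma \ref{lemBC1}. The extra care you take in chasing the image $\tilde\mu_l\pi_l(\Cont^m(E\cap\tilde Y))=\phi_l^{-1}\bigl((\mu_0)_l\pi_l(\Cont^m(E_0))\bigr)$ through the Cartesian square is a welcome explicitation of a step the paper leaves implicit, but the argument is the same.
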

\begin{proof}
It is enough to cover $X$ by open affine subsets $U$ and prove the claim for the restriction $\mu^{-1}(U)\ra U$ instead of $Y\ra X$. Let $\nu$ be the codimension of $Z$. Since the sequence of locally free sheaves
$$
0\ra I_Z/I_Z^2\ra \Omega_{X} \ra \Omega_Z \ra 0
$$ 
is exact, we can find $U$ and sections $x_1,\ldots, x_d$ of $\cO_X(U)$ with $x_1,\ldots, x_\nu$ generating $I_Z(U)$,   $dx_1,\ldots, dx_d$ trivializing $\Omega_X(U)$, and $dx_1,\ldots, dx_\nu$ trivializing $\Omega_Z(U)$. Thus we obtain a base change diagram
$$
\xymatrix{
Z\cap U \ar[r]\ar[d] & \bA^{d-\nu} \ar[d]\\
U \ar[r] & \bA^{d}.
}
$$
with the horizontal morphism being \'etale and the vertical ones closed immersions. Thus $\mu^{-1}(U)\ra U$ is the \'etale base change of the blowing up of $\bA^d$ along the linear subspace $\bA^{d-\nu}$. The claim follows from Lemma \ref{lemBC1} and the compatibility of jet schemes with \'etale morphisms.
\end{proof}

In what follows, the definition of a log resolution is more relaxed than in the introduction: we do not assume it anymore to be an isomorphism outside a fixed closed locus.

\begin{theorem}\label{lemLoo}
Let $\mu:Y\ra X$ be a log resolution of an ideal subsheaf $I$ of $\cO_X$ obtained by successively blowing up smooth centers. Let $I\cdot\cO_Y=\cO_Y(-\sum_i m_iE_i)$, where $E_i$ are the irreducible components of the zero locus $E$ of $I\cdot\cO_Y$, and $m_i\in \bN$. Let $\bk=(k_i)_i$ be a tuple of natural numbers. Then  for $l\gg 0$ the map
$$
\mu_l:\pi_l(\Cont^\bk(E))\ra\mu_l\pi_l(\Cont^\bk(E))
$$
if a Zariski locally-trivial fibration with fiber $\bC^e$ with $e=\sum_ik_i\cdot\ord_{K_{Y/X}}E_i$.
\end{theorem}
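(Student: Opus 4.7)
The plan is to induct on the number $r$ of blowups in a factorization $\mu = \mu_r \circ \cdots \circ \mu_1$ along smooth centers. For the induction to loop cleanly, I work with the mildly strengthened statement that the conclusion holds for any reduced SNC divisor $D = \sum_i D_i$ on $Y$ (not only the components of $V(I \cO_Y)$) and any multiplicities $\bk = (k_i)$; the same proof then yields Theorem \ref{lemLoo} as a special case. The base case $r = 1$ is Lemma \ref{lemBlow} together with the observation that contact conditions on strict transforms of divisors coming from $X$ cut out matching locally closed subsets of source and target of the $\mu_l$-fibration without affecting its fiber.

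For the inductive step, write $\mu = \mu' \circ \mu''$ with $\mu'' \colon Y \to Y'$ the final blowup along a smooth center $Z \subset Y'$ of codimension $\nu$ and exceptional divisor $F$. Each component of $D$ is either $F$ or the strict transform $\tilde D'_j$ of some divisor $D'_j \subset Y'$, with $\mu''^{\,*} D'_j = \tilde D'_j + a_j F$. Writing $\bk = (k_F, \tilde k_j)$ (with $k_F = 0$ if $F \notin D$), set $k'_j := \tilde k_j + a_j k_F$ and $\bk' := (k'_j)$, so that by the valuative criterion of properness
\[
\mu''_\infty\bigl(\Cont^\bk(D)\bigr) \;=\; \Cont^{k_F}(I_Z) \cap \Cont^{\bk'}(D'),
\]
where $D' = \sum_j D'_j$. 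I factor $\mu_l$ through the jet level of this intermediate locus and show each factor is a Zariski locally trivial fibration with affine-space fiber.

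The first factor is handled by Lemma \ref{lemBlow} for $\mu''$, which gives a Zariski locally trivial fibration $\pi_l(\Cont^{k_F}(F)) \to \mu''_l \pi_l(\Cont^{k_F}(F))$ with fiber $\bC^{(\nu-1)k_F}$. The extra constraints $\ord \tilde D'_j = \tilde k_j$ on the source translate, via $\tilde D'_j = \mu''^{\,*} D'_j - a_j F$, to $\ord_{\gamma'} D'_j = k'_j$ on the target; cutting out these matching subsets preserves the fibration structure with the same fiber. For the second factor, the inductive hypothesis applied to $\mu' \colon Y' \to X$ with the SNC divisor $D'$ provides a Zariski locally trivial fibration $\pi_l(\Cont^{\bk'}(D')) \to \mu'_l \pi_l(\Cont^{\bk'}(D'))$ with fiber $\bC^{e'}$, where $e' = \sum_j k'_j \ord_{K_{Y'/X}} D'_j$. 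It remains to check that the additional constraint $\ord_{\gamma'} I_Z = k_F$ is saturated for $\mu'_l$, i.e., cuts out the preimage of a locally closed subset of the target; a saturated restriction of a Zariski locally trivial fibration is again one with the same fiber.

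The main obstacle is this saturation claim: for $l \gg 0$, any two arcs $\gamma'_1, \gamma'_2 \in \Cont^{\bk'}(D')$ with $\mu'(\gamma'_1) \equiv \mu'(\gamma'_2) \pmod{t^{l+1}}$ must satisfy $\ord_{\gamma'_1} I_Z = \ord_{\gamma'_2} I_Z$. The strategy is sharper control on the fibers of $\mu'_l$: the proofs of Lemmas \ref{lemmaBl1} and \ref{lemBlow} exhibit each fiber as a perturbation of coefficients of orders at least $l - c$ with $c$ independent of $l$, and propagating this through the blowup tower of $\mu'$ yields a constant $C = C(\mu', \bk')$ such that two arcs in a common fiber of $\mu'_l$ agree modulo $t^{\,l - C}$. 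Choosing $l > k_F + C$ makes the condition $\ord I_Z = k_F$ depend only on shared low-order coefficients, giving saturation. Composing the two fibrations yields fiber $\bC^{(\nu - 1)k_F + e'}$, and the additivity $K_{Y/X} = K_{Y/Y'} + \mu''^{\,*} K_{Y'/X}$ combined with $\mu''^{\,*} D'_j = \tilde D'_j + a_j F$ and $k'_j = \tilde k_j + a_j k_F$ gives $(\nu - 1) k_F + e' = \sum_i k_i \ord_{K_{Y/X}} D_i$, completing the induction.
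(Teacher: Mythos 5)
Your overall plan—factor $\mu$ through the tower of blowups and chain Zariski locally‐trivial fibrations—is the same skeleton the paper uses, and your identification of the "saturation" issue as the crux of the matter is exactly right. But there are genuine gaps that the paper's proof avoids, and the way you propose to close them would not work as written.

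\textbf{The intermediate divisor need not be SNC, and the strengthened induction hypothesis is not quite true.} Your inductive hypothesis is stated for arbitrary reduced SNC divisors on the source of the blowup chain. When you factor $\mu=\mu'\circ\mu''$ and pass to the divisor $D'=\sum_j D'_j$ on $Y'$ whose strict transforms lie in $D$, you have no reason to believe $D'$ is simple normal crossings: blowing up $Z$ is often precisely what makes the configuration SNC. So the IH as you state it is not applicable at the next level. Moreover, even for SNC $D$, the fiber-dimension formula $e=\sum_i k_i\ord_{K_{Y/X}}D_i$ requires that the exceptional locus of $\mu$ be \emph{contained} in $D$: if some exceptional component $E'$ is not among the $D_i$, then $\ord_\gamma K_{Y/X}$ is not constant over $\Cont^{\bk}(D)$, the generic fiber dimension jumps, and the map cannot be a Zariski locally trivial fibration with the stated fiber. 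Your bookkeeping $(\nu-1)k_F+e'=\sum_i k_i\ord_{K_{Y/X}}D_i$ also uses $\sum_\alpha c'_\alpha a_\alpha=\sum_j a_jc'_j$, which is exactly the requirement that every component of $K_{Y'/X}$ appear in $D'$; this is fine for the original $E$ but is not part of your strengthened statement.

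\textbf{The saturation step is the content of \cite[Theorem 2.1]{ELM}, and the sketch given does not replace it.} You correctly isolate the claim that $\ord_{\gamma'}I_Z=k_F$ is $\mu'_l$-saturated as the key obstruction, and you propose to prove it by "propagating" a modulo-$t^{l-c}$ control through the tower. This is in spirit the change-of-variables estimate underlying motivic integration, but as written it is not an argument: Lemma~\ref{lemmaBl1} gives the fiber shape only in one affine chart of one blowup, the constant $c$ must be shown bounded uniformly over the non-compact cylinder $\Cont^{\bk'}(D')$, and the "agreement modulo $t^{l-C}$" must be shown to be chart-independent as you transition between blowup charts. None of this is established. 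The paper sidesteps all of this by invoking \cite[Theorem 2.1]{ELM} to get that each intermediate image $\sY^j=\mu^{N,j}_\infty(\Cont^\bk(E))$ is a cylinder and is a union of $\mu^{N,j}_\infty$-fibers, hence $\sY^j=(\mu^j_\infty)^{-1}(\sY^{j-1})$; passing to jets, $\sY^j_l=(\mu^j_l)^{-1}(\sY^{j-1}_l)$, and each step of the tower is then literally a base change of the single-blowup fibration of Lemma~\ref{lemBlow}. Because the paper only ever imposes a single contact condition ($\Cont^{q_j}(E^j)$ or $\Cont^{p_{j-1}}(Z^{j-1})$) at each intermediate level, the SNC issue above never arises either. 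If you want to keep an inductive formulation, the cleanest fix is to invoke the ELM cylinder theorem to obtain saturation directly rather than re-deriving it by coefficient bounds, and to track only the single contact order at each intermediate level rather than carrying the full multi-divisor condition through the tower.
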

\begin{proof}
The proof of \cite[Lemma 9.2]{Loo} covers the claim except the fibration is only showed to be piecewise locally-trivial.

Factor $\mu:Y\ra X$ into 
$$
Y=Y^N\xra{\mu^N} Y^{N-1}\ra\ldots\xra{\mu^2} Y^1\xra{\mu^1} Y^0=X,
$$
with $$\mu^j:Y^j\ra Y^{j-1}$$ the blowing up along a smooth closed subvariety $Z^{j-1}$ of $Y^{j-1}$. Let $E^j$ be the exceptional divisor introduced by $\mu^j$.

Let $$\mu^{j,k}=\mu^k\circ\mu^{k+1}\circ\ldots\circ\mu^j:Y^j\ra Y^k $$ for $k\le j$. 
Define $$\sY^j=\mu^{N,j}_\infty (\Cont^\bk(E))\quad\subset \cL(Y^j),$$
so that we obtain a tower of surjective maps
$$
\Cont^\bk(E)=\sY^N \ra  \sY^{N-1}\ra\ldots\ra\sY^1\ra\sY^0=\mu_\infty(\Cont^\bk(E))
$$
induced by the maps $\mu^j_\infty:\cL(Y^j)\ra\cL_\infty(Y^{j-1})$.

By \cite[Theorem 2.1]{ELM} applied to the proper birational map $\mu^{N,j}:Y=Y^N\ra Y^j$, each $\sY^j$ is a cylinder. The proof of this fact also shows that $\Cont^\bk(E)=\sY^N$ is a union of fibers of $\mu^{N,j}_\infty:\cL_\infty(Y^N)\ra\cL_\infty(Y^j)$ for each $j$. That is,
$\sY^N=(\mu^{N,j}_\infty)^{-1}(\sY^j)$ for all $j$. Thus $$\sY^j=(\mu^j_\infty)^{-1}(\sY^{j-1})$$ for all $j$ as well. Since $\mu^{N,j}$ is a log resolution of $(Y^j,Z^{j})$ and of $(Y^j,E^j)$, by {\it loc. cit.} we have that
$$
\sY^j\subset \Cont^{p_j}(Z^{j})\quad\text{ and }\quad \sY^j\subset \Cont^{q_j}(E^j),
$$
for
$$
p_j=\sum_i k_i\cdot \ord_{Z^{j}}E_i\quad\text{ and }\quad q_j=\sum_ik_i\cdot\ord_{E^j}E_i = p_{j-1}.
$$

The cylinder property allows us to draw the same conclusions for $l$-jets for $l\gg 0$. Define $$\sY^j_l:=\pi_l(\sY^j).$$  Then for $l\gg 0$,
$$
\sY^j=\pi_l^{-1}(\sY^j_l)
$$
and we have a tower of surjective maps
$$
\pi_l(\Cont^\bk(E))=\sY^N_l \ra  \sY^{N-1}_l\ra\ldots\ra\sY^1_l\ra\sY^0_l=\mu_l\pi_l(\Cont^\bk(E)).
$$
induced by the maps $\mu^j_l:\cL_l(Y^j)\ra\cL_l(Y^{j-1})$, such that
$$
\sY^j_l=(\mu^j_l)^{-1}(\sY^{j-1}_l),
$$
$$
\sY^j_l\subset \pi_l(\Cont^{p_j}(Z^{j}))\quad\text{ and }\quad \sY^j_l\subset \pi_l(\Cont^{q_j}(E^j)).
$$
Therefore the map $\sY^j_l\ra\sY^{j-1}_l$ is obtained by base change from the map
$$
\pi_l(\Cont^{q_j}(E^j))\ra \mu^j_l\pi_l(\Cont^{q_j}(E^j))=\pi_l(\Cont^{p_{j-1}}(Z^{j-1})),
$$
which is Zariski locally-trivial by Lemma \ref{lemBlow}. Thus each map $\sY^j_l\ra\sY^{j-1}_l$ is a Zariski locally-trivial fibration, and so the composition $\sY^N_l\ra\sY^0_l$ is as well.
\end{proof}

\noindent
{\bf Acknowledgement.} We thank J. Sebag and the referees for useful comments.

N.B. was partly supported by the grants STRT/13/005 and Methusalem METH/15/026 from KU Leuven, G097819N and G0F4216N from FWO.

J.F.B. was supported by ERCEA 615655 NMST Consolidator Grant, MINECO by the project 
reference MTM2016-76868-C2-1-P (UCM), by the Basque Government through the BERC 2018-2021 program and Gobierno Vasco Grant IT1094-16, by the Spanish Ministry of Science, Innovation and Universities: BCAM Severo Ochoa accreditation SEV-2017-0718 and by Bolsa Pesquisador Visitante Especial (PVE) - Ciencias sem Fronteiras/CNPq Project number:  401947/2013-0.

L.Q.T. was supported by the grants mentioned for J.F.B. 

H.D.N. was supported by the grants mentioned for J.F.B, by Juan de la Cierva Incorporaci\'on IJCI-2016-29891, and the National Foundation for Science and Technology Development (NAFOSTED), Grant number 101.04-2019.316, Vietnam.

\end{document}